\newtheorem{theorem}{Theorem}
\newtheorem{lemma}{Lemma}
\newtheorem{claim}{Claim}
\newtheorem{corollary}[theorem]{Corollary}
\newtheorem{proposition}[theorem]{Proposition}
\newtheorem{remark}{Remark}
\newcommand{\Prob}{\mathbb P\,}
\newcommand{\Esp}{\mathbb E\,}
\newcommand{\N}{\mathbb N}
\newcommand{\R}{\mathbb R}
\newcommand{\Jc}{\textrm{J}}
\newcommand{\X}{\textrm{X}}
\newcommand{\Zc}{\textrm{Z}}
\newcommand{\Ic}{\mathcal I}
\newcommand{\indicator}{\boldsymbol 1}
\author{Federico Dalmao\thanks{
Mailing address: Departamento de Matem\'{a}tica y Estad\'{i}stica del Litoral, Regional Norte,  
Universidad de la Rep\'{u}blica, Rivera 1350, 50000, Salto, Uruguay. Telephone number: 598 4734 2924. 
E-mail: fdalmao@unorte.edu.uy.} 
\qquad 
Ernesto Mordecki\thanks{
Mailing address: Centro de Matem\'atica, Facultad de Ciencias, Universidad de la Rep\'{u}blica, Igu\'a 4225, 11400, Montevideo, Uruguay.
Telephone number: 598 2525 2522,
E-mail: mordecki@cmat.edu.uy,
Fax number: 598 2522 0653.}
}
\begin{document}
\maketitle
\begin{abstract}
We extend Rice Formula to a process which is the sum of two independent processes: 
a smooth process and a pure jump process with finitely many jumps. 
Formulas for the mean number of both continuous and discontinuous crossings 
through a fixed level on a compact time interval are obtained. 
We present examples in which we compute explicitly the mean number of crossings and 
compare which kind of crossings dominate for high levels. 
In one of the examples the leading term of the tail of the distribution function 
of the maximum of the process over a compact time interval as the level goes to infinity is obtained. 
We end giving a generalization, to the non-stationary case, of Borovkov-Last's Rice Formula for 
Piecewise Deterministic Markov Processes.
\end{abstract}

\section{Introduction}\label{sec:intro}
In the present work, we are interested in the number of crossings through a fixed level by 
a class of stochastic processes having finitely many jumps on compact intervals  
and smooth stochastic evolution between the jumps. 
More precisely, we consider a process $\X$ which can be written in the form $\X=\Zc+\Jc$  
where $\Zc$ is a process with continuously differentiable paths and $\Jc$ is a pure jump process, 
independent from $\Zc$. 
Thus, $\Zc$ describes the continuous part of $\X$ and $\Jc$ describes the jumps of $\X$.

Such a process can cross a fixed level $u$ at a continuity point or at a jump one. 
Under general conditions, 
we obtain Rice-type formulas for the mean number of (both) continuous and discontinuous crossings through a fixed level $u\in\R$ 
on a compact time interval.
Afterwards, we give explicit algebraic expressions for the mean number of crossings 
in two examples. Then, 
we compare which kind of crossing (continuous/discontinuous) 
dominates as the level $u$ goes to infinity and find different behaviors on these examples. 
Besides, in the first example we 
derive second order Rice formulas for
the continuous and discontinuous crossings and use them 
to obtain the exact asymptotic leading term  
of $\Prob(M(T)>u)$ as $u\to\infty$ where   
$M(T)=\max\{X(t):t\in[0,T]\}$.

The paper is organized as follows.
Section 2 gives some background on Rice formulas 
and compares the present work with similar previous ones. 
Section 3 starts with the 
definitions and then
presents the main result (Theorem \ref{teo:nogauss}). 
Section 4 presents the examples. 
Section 5 moves a bit from the main line 
presenting a generalization of Borovkov-Last's Rice-type formula to the non-stationary case. 
Finally, our main result is proved in Section 6.

This work began under the initiative of Mario Wschebor, and is largely inspired by his fundamental
contribution in the field, masterly exposed in \cite{aw}.

\section{Background}
Counting the number of crossings through a fixed level by a stochastic process 
is a classical problem in Probability Theory. 
Nevertheless little is known, in general, about the distribution of the random variable \emph{number of crossings}. 
Therefore, Rice Formulas, which give expressions for the moments of this r.v, are very important and useful. 

Rice Formulas are named after S.O. Rice who 
obtained  by informal arguments a simple formula for the mean number of crossings through a fixed level 
for stationary Gaussian processes in 1944 \cite{Rice44,Rice45}. 
This formula was afterwards proved under successively weaker conditions, see \cite{aw,kratz,Lead} and references therein. 
The major part of the literature on the subject is devoted to the Gaussian case, in particular to the stationary one. 
Extensions for general Gaussian, non-Gaussian processes and fields were stated afterwards, 
see \cite{adler-samo,aw,brillinger,kratz,marcus,zahle}.
Some recent extensions to non-Gaussian processes include 
the Shot Noise processes \cite{shot,shot-saltos}, the Generalized Hyperbolic Process \cite{hyper}, 
the Laplace Moving Average \cite{galtier} 
and the Piecewise Deterministic Markov Processes \cite{bor-last,bor-lastn}.

The applications of Rice Formulas include telecommunications and signal processing \cite{Rice44,Rice45}; 
reliability theory in engineering \cite{Rychlik}; 
oceanography (the height of sea waves) \cite{sea waves,longuett}; 
physics and astronomy: random mechanics \cite{kree},  
the Shot Noise processes \cite{shot,shot-saltos} and microlensing \cite{Micro}; 
random systems of polynomial equations \cite{Mario counting,arw}, 
among others.

The case of discontinuous processes has been treated only recently in the literature.
Borovkov and Last \cite{bor-last,bor-lastn} 
consider the number of continuous crossings of a discontinuous process with random jumps and deterministic evolution between jumps.
The continuous and discontinuous parts of the process are not independent, 
hence, our main result does not apply. 
Nevertheless, we extend the Rice formula given in \cite{bor-last} in Section 5. 
Bierm\'e and Desoulneux \cite{shot-saltos}, 
consider a process obtained 
by superposition of randomly scaled and translated copies 
of a given deterministic discontinuous function (Shot noise process).
They are interested in the expectation of the total number of crossings for different levels for this particular process,
and obtain the Fourier transform of this expectation with respect to these levels. 

\section{Preliminaries and Main Result}\label{section:2}
Let $f:[0,T]\to\R$ be c\`{a}dl\`{a}g and $C^{1}$ between jumps. 
We say that $f$ has a 
\begin{enumerate}
 \item[$\cdot$] continuous crossing through the level $u$ at $s\in(0,T)$ 
if $f$ is continuous at $s$, $f(s)=u$ and 
$f^{\prime}(s)\neq 0$; 
 \item[$\cdot$] a discontinuous crossing if $(f(s^{-})-u)(f(s)-u)<0$.
\end{enumerate}
If $f^{\prime}(s)>0$ in the continuous case, or $f(s^{-})<u<f(s)$ in the discontinuous one, we say that 
$f$ has an up-crossing at $s$, otherwise the crossing is a down-crossing.
\begin{remark}
For the processes considered here this definition coincides with that of 
Scheutzow \cite{scheutzow} and 
in the case of continuous crossings also with that of 
Leadbetter, Lindgren and Rootz\'{e}n \cite{LLR}.
\end{remark}

Now, we introduce the processes, all defined on $[0,T]$. 
Let $\Zc=(Z(t))$ and $\Jc=(J(t))$ be independent stochastic processes 
and define 
$\X=(X(t))$ with $X(t)=Z(t)+J(t)$.

We assume some regularity conditions on the process $\Zc$, see \cite{aw}, namely: 
\begin{enumerate}
\item[$A1:$] the paths of $\Zc$ are $C^1$, almost surely.

\item[$A2:$] the density $p_{Z(t)}(x)$ is jointly continuous for $t\in[0,T]$ and $x$ in a neighborhood of $u$.
Furthermore, assume that for every $t,t^{\prime}\in[0,T]$ the joint distribution of $(Z(t),\dot{Z}(t^\prime))$ 
has a density $p_{Z(t),\dot{Z}(t^{\prime})}(x,x^{\prime})$ 
which is continuous w.r.t. $t$ ($t^{\prime},x,x^{\prime}$ fixed) 
and w.r.t. $x$ at $u$ ($t,t^{\prime},x^{\prime}$ fixed). 

\item[$A3:$] for every $t\in[0,T]$ there exists a continuous version of the conditional expectation 
$\Esp(\dot{Z}(t)\mid X(t)=x)$ for $x$ in a neighborhood of $u$.

\item[$A4:$] the modulus of continuity of $\dot{Z}$ tends to $0$ if $\delta\to 0$:
\begin{equation*}
\mathop{\sup}\limits_{0\leq s<t\leq T,\,|t-s|<\delta}|\dot{Z}(t)-\dot{Z}(s)|\mathop{\to}\limits_{\delta\to 0}0.
\end{equation*}
\end{enumerate}

We also assume that 
\begin{enumerate}
 \item[$B:$] $\Jc$ is a pure jump process with almost surely finitely many jump epochs 
on any compact time interval.
\end{enumerate}
The $n$-th jump epoch and the $n$-th jump magnitude of $\Jc$ are denoted respectively 
by $\tau_n$ and $\xi_n$. 
The process $(\tau_n,\xi_n)_{n\in\N}$ is a Marked Point Process, 
see \cite{Jac} for the general construction based on Markov kernels. 
Finally, for $\tau_n\leq t<\tau_{n+1}$ let $\nu_{t}=n$. Hence
\begin{equation*}
J(t)=\sum^{\nu_t}_{k=0}\xi_{k}.  
\end{equation*}

We can identify the marked point process $\big(\tau_n,J(\tau_n)\big)_{n\in\N}$ 
with its associated random counting measure (RCM) 
$\mu:=\sum^{\nu_T}_{n=0}\delta_{(\tau_n,J(\tau_n))}$ 
on $[0,T]\times\R$, where $\delta_{(t,x)}$ is the Dirac Delta measure concentrated at the point $(t,x)$. 
Note that the RCM $\mu$ of a Borel set $C$ in $[0,T]\times\R$ 
is just the number of points (jump epochs and marks) of the marked point process that lie in $C$.

There exists a random measure $L(dt,dy)$ on $[0,T]\times\R$, 
called the compensating measure of $\mu$, 
such that the processes $t\mapsto L([0,t],A)$ are 
predictable for any Borel set $A$ in $\R$ and, 
under quite general conditions (for example the absolute continuity of the kernels and the finiteness of the intensity of jumps), 
$L$ can be written in terms of ordinary Lebesgue integrals. 
Furthermore
\begin{equation*}
\Esp\int_{[0,T]\times\R}fd\mu=\Esp\int_{[0,T]\times\R}fL(dt,dy)
\end{equation*}
for any predictable function $f$, see \cite{Jac}. 

We can apply the definition of crossing to almost all of the paths. 
Denote $N^{c}_{u}$ (resp. $N^{d}_{u}$) the random number of continuous (resp. discontinuous) 
crossings through the level $u$ by the process $\X$ on the interval $[0,T]$ 
and by $N_{u}$ the total number of crossings. 
For up-crossings we use the same notation with $N$ replaced by $U$. 
It is clear that $N_{u}=N^{c}_{u}+N^{d}_{u}$, and consequently that $\Esp N_{u}=\Esp N^{c}_{u}+\Esp N^{d}_{u}$.
\begin{theorem}\label{teo:nogauss}
Let $\Zc$ and $\Jc$ be two independent processes on $[0,T]$ 
verifying conditions $A1$-$A4$ and $B$ respectively.  
Then, the mean number of continuous and discontinuous crossings through the level $u$ 
by the process $\X=\Zc+\Jc$ on the interval $[0,T]$ are given respectively by:
\begin{multline*}
\Esp N^{c}_{u}=\int_{[0,T]}\Esp(|\dot{X}(t)|\mid X(t)=u)\,p_{X(t)}(u)dt\\
=\int_{[0,T]}\Esp(|\dot{Z}(t)|\mid X(t)=u)\,p_{X(t)}(u)dt,
\end{multline*}
and
\begin{equation*}
\Esp N^{d}_{u}=\Esp\iint_{[0,T]\times\R} \indicator{\{(X(t^{-})-u)(X(t^{-})+y-u)<0\}}L(dt,dy),
\end{equation*}
where $\indicator A$ is the indicator function of the set $A$ and
$L$ is the compensating measure of the random counting measure generated on $[0,T]\times\R$ by the jump process $\Jc$. 
Similar formulas hold for the number of up and down crossings.
\end{theorem}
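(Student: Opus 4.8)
The proof proceeds along the decomposition $N_u=N^c_u+N^d_u$, treating the two terms with completely different tools, as anticipated above. Two preliminary remarks are used throughout. First, since $\Jc$ has finite intensity, almost every path of $\Jc$ has only finitely many jumps on $[0,T]$, so the jump set $\{\tau_1,\tau_2,\dots\}$ is a.s.\ Lebesgue-null; moreover, conditioning on $\Jc$ fixes all the jump instants and increments, and on each interval $[\tau_n,\tau_{n+1})$ the process $\X$ then coincides with the $C^1$ process $\Zc$ shifted by the constant $J(\tau_n)$, so that, conditionally on $\Jc$, the conditions $A1$--$A4$ for $\Zc$ are what is needed to run the classical Rice arguments of \cite{aw}. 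Second, by $A2$, for each $n$ the random variables $X(\tau_n)$ and $X(\tau_n^-)=Z(\tau_n)+J(\tau_n^-)$ have no atom at $u$ and, again by the usual consequence of $A2$, a.s.\ $\X$ has no point $s$ with $X(s)=u$ and $\dot X(s)=0$; hence a.s.\ every crossing of $u$ by $\X$ is unambiguously either a continuous crossing (at a continuity point, with non-zero derivative) or a discontinuous one (at a jump instant), so that $N_u=N^c_u+N^d_u$ a.s.\ and it suffices to prove the two formulas separately.

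For $\Esp N^c_u$, the plan is to condition on $\Jc$ and use Kac's counting formula. Off the jump set one has $\dot X=\dot Z$, and on each maximal interval where $\X$ is $C^1$ Kac's formula counts exactly the continuous crossings; since by the first remark the neighbourhoods of the jump instants contribute a Lebesgue-null set and (because a.s.\ $X(\tau_n^\pm)\neq u$) no crossings in the limit, one gets a.s.
$$N^c_u=\lim_{\varepsilon\downarrow 0}\frac{1}{2\varepsilon}\int_{0}^{T}\indicator{\{|X(t)-u|<\varepsilon\}}\,|\dot Z(t)|\,dt.$$
Taking $\Esp(\cdot\mid\Jc)$, the interchange of limit and integral is the one genuinely delicate point, and is justified exactly as in \cite{aw} through the modulus-of-continuity bound $A4$ together with the density estimate from $A2$. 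Evaluating the inner limit by the continuity statements in $A2$ and $A3$ (applied at the shifted level $u-J(t)$) gives
$$\Esp\big(N^c_u\mid\Jc\big)=\int_{0}^{T}\Esp\big(|\dot Z(t)|\,\big|\,X(t)=u,\Jc\big)\,p_{X(t)\mid\Jc}(u)\,dt.$$
It then remains to integrate over $\Jc$: disintegrating the joint law of $(X(t),\Jc)$ in the $X(t)$-variable yields $\Esp\big[\Esp(|\dot Z(t)|\mid X(t)=u,\Jc)\,p_{X(t)\mid\Jc}(u)\big]=\Esp(|\dot Z(t)|\mid X(t)=u)\,p_{X(t)}(u)$, where $p_{X(t)}(u)=\Esp\,p_{Z(t)}(u-J(t))$ exists and is continuous near $u$ by independence and $A2$; Tonelli then lets one pull the expectation inside $\int_0^T$. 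This is the first displayed formula, and the second follows because for a.e.\ $t$ one has $\dot X(t)=\dot Z(t)$ a.s., so the two integrands agree for a.e.\ $t$.

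For $\Esp N^d_u$, I would express the discontinuous crossings through the random counting measure $\mu=\sum_{n\le\nu_T}\delta_{(\tau_n,J(\tau_n))}$. At a jump instant $\tau_n$ one has $X(\tau_n)=X(\tau_n^-)+J(\tau_n)$, so a discontinuous crossing occurs there precisely when $(X(\tau_n^-)-u)(X(\tau_n^-)+J(\tau_n)-u)<0$, whence
$$N^d_u=\iint_{[0,T]\times\R}\indicator{\{(X(t^-)-u)(X(t^-)+y-u)<0\}}\,\mu(dt,dy).$$
The integrand is a Borel function of $(X(t^-),y)$, and $X(t^-)=Z(t)+J(t^-)$ is left-continuous and adapted to $\mathcal F_t=\sigma(\Zc)\vee\mathcal F^{\Jc}_t$, hence predictable for this filtration; moreover, since $\Zc$ is independent of $\Jc$, enlarging $\mathcal F^{\Jc}$ by the independent $\sigma$-field $\sigma(\Zc)$ does not change the compensator, so $L$ remains the $\mathcal F_t$-compensator of $\mu$. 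Applying the compensation identity $\Esp\iint f\,d\mu=\Esp\iint f\,dL$, valid for non-negative predictable $f$, to this integrand gives exactly the asserted formula for $\Esp N^d_u$.

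Finally, the up- and down-crossing formulas follow by the same arguments, replacing $|\dot Z(t)|$ by its positive, resp.\ negative, part in the continuous formula, and the indicator by $\indicator{\{X(t^-)<u<X(t^-)+y\}}$, resp.\ $\indicator{\{X(t^-)+y<u<X(t^-)\}}$, in the discontinuous one. As indicated, the main obstacle is the Kac-formula limit interchange for the continuous part; once that standard step is secured via $A4$, the rest is the density and conditional-expectation bookkeeping above together with the point-process compensation formula.
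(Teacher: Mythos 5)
Your proposal is correct and follows essentially the same route as the paper: for $N^c_u$ you condition on the jump structure, apply the classical Rice/Kac argument of \cite{aw} on each inter-jump interval where $\X$ is $\Zc$ plus a constant, and then integrate out the conditioning by disintegration (the paper does this step by step via its Lemma \ref{lema}, conditioning successively on $\nu_T$, $\boldsymbol{\tau}$ and $J(\tau_{k-1})$); for $N^d_u$ you use the same representation of $N^d_u$ as an integral against the random counting measure and the compensation formula for predictable integrands, your filtration-enlargement remark being the same device as the paper's conditioning on $\Zc$.
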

The proof is postponed to Section 6, let us say here that the 
continuous and the discontinuous crossings are treated separately and by different methods.
An interesting feature of the approach we propose is the fact that Kac Formula \cite{kac} counts only the continuous crossings.
The discontinuous ones are counted using techniques from Point Processes Theory.

Some remarks are in order. 
First, 
note that when the jump process $\Jc$ vanishes, that is, $J(t)=0$ almost surely for all $t\in[0,T]$, 
Theorem \ref{teo:nogauss} reduces to Classical Rice Formula for the process $\Zc$. 

Next, 
observe that the random variable $J(t)$ does not need to have a density for each $t$, but, 
in case it does we have a more explicit result.
\begin{corollary}\label{cor:pzpj}
If $J(t)$ has a continuous density $p_{J(t)}(x)$ for $t\in[0,T]$ and $x\in\R$, we can also write
\begin{equation*}
\Esp N^{c}_{u}=
\int_{[0,T]}dt\int_{\R}\Esp\left(|\dot{Z}(t)|\mid Z(t)=v
\right)p_{Z(t)}(v)p_{J(t)}(u-v)dv. 
\end{equation*}
\end{corollary}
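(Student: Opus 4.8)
The plan is to derive the corollary directly from Theorem~\ref{teo:nogauss}. That theorem gives
\[
\Esp N^{c}_{u}=\int_{[0,T]}\Esp\big(|\dot{Z}(t)|\mid X(t)=u\big)\,p_{X(t)}(u)\,dt ,
\]
so it suffices to establish, for (almost) every fixed $t\in[0,T]$ and the fixed level $u$, the identity
\[
\Esp\big(|\dot{Z}(t)|\mid X(t)=u\big)\,p_{X(t)}(u)=\int_{\R}\Esp\big(|\dot{Z}(t)|\mid Z(t)=v\big)\,p_{Z(t)}(v)\,p_{J(t)}(u-v)\,dv ,
\]
and then integrate in $t$. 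Recall that $\dot{X}(t)=\dot{Z}(t)$ between jumps, so only the continuous component enters the derivative factor; what has to be understood is how the independent jump component acts on the value $X(t)=Z(t)+J(t)$.

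First I would test the target identity against an arbitrary bounded continuous function $\phi$. Using the independence of $(Z(t),\dot{Z}(t))$ and $J(t)$, conditioning first on $J(t)$ (which has density $p_{J(t)}$) and then disintegrating the inner expectation along $Z(t)$, one gets
\[
\Esp\big(|\dot{Z}(t)|\,\phi(X(t))\big)=\int_{\R}\!\int_{\R}\phi(v+w)\,\Esp\big(|\dot{Z}(t)|\mid Z(t)=v\big)\,p_{Z(t)}(v)\,p_{J(t)}(w)\,dv\,dw ,
\]
where the disintegration uses the continuous version of $\Esp(\,\cdot\mid Z(t)=v)$ from $A3$ and the density $p_{Z(t)}$ from $A2$. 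Performing the change of variables $u=v+w$ and comparing with the defining relation $\Esp\big(|\dot{Z}(t)|\,\phi(X(t))\big)=\int_{\R}\phi(u)\,\Esp(|\dot{Z}(t)|\mid X(t)=u)\,p_{X(t)}(u)\,du$ shows that the two sides of the claimed identity agree for almost every $u$. An equivalent, slightly more direct route is a tilting argument: by independence the conditional law of $Z(t)$ given $X(t)=u$ has density $v\mapsto p_{Z(t)}(v)p_{J(t)}(u-v)/p_{X(t)}(u)$, while conditionally on $Z(t)=v$ the variable $\dot{Z}(t)$ keeps the law it has given $Z(t)=v$ alone; the tower property then yields the identity directly, with $p_{X(t)}(u)=\int_{\R}p_{Z(t)}(v)p_{J(t)}(u-v)\,dv$ the convolution density of the independent sum $X(t)=Z(t)+J(t)$.

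To pass from ``almost every $u$'' to the specific level $u$ (which is what the corollary requires), I would argue that the right-hand side of the identity is continuous in $u$ near the level, via the joint continuity of $p_{Z(t)}$ from $A2$, the continuity of $p_{J(t)}$, and dominated convergence in the $v$-integral, whose total mass $\int_{\R}\Esp(|\dot{Z}(t)|\mid Z(t)=v)p_{Z(t)}(v)\,dv$ controls it; on the left-hand side the same role is played by the regularity hypotheses for $\X$, which are inherited from those for $\Zc$ through the convolution with $p_{J(t)}$. Finally, integrating the pointwise identity over $t\in[0,T]$ and exchanging the order of integration is legitimate by Tonelli's theorem, since the integrand is nonnegative and the resulting double integral equals $\Esp N^{c}_{u}$, finite by Theorem~\ref{teo:nogauss}. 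The step I expect to demand the most care is exactly this regularity bookkeeping: checking that all the densities and conditional expectations involved are genuinely defined, and continuous, at the level $u$ rather than merely almost everywhere, so that the convolution/tilting identity can be read off at that single point — which is precisely what assumptions $A2$, $A3$ and the continuity of $p_{J(t)}$ are there to guarantee.
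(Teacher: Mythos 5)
Your argument is correct and is essentially the derivation the paper intends: the corollary is stated without a printed proof, but the paper's Lemma~\ref{lema} is exactly the disintegration you use, combined with the independence of $\Zc$ and $\Jc$ to replace $\Esp(|\dot Z(t)|\mid Z(t)=v,\,J(t)=u-v)$ by $\Esp(|\dot Z(t)|\mid Z(t)=v)$ and to factor the joint density as $p_{Z(t)}(v)\,p_{J(t)}(u-v)$, after which one integrates in $t$ as you do. Your additional care in passing from an almost-everywhere identity in $u$ to the fixed level $u$ via continuity is consistent with the paper's own remark that for this corollary the regularity in $A3$ should be read with $Z(t)$ in place of $X(t)$.
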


Finally, 
a careful analysis of the proof of Theorem \ref{teo:nogauss} in Section 6 shows that 
the result in Theorem \ref{teo:nogauss} holds true whenever the law of the process $\Zc$, restricted to the subintervals $[\tau_i,\tau_{i+1}]$, 
conditioned to the paths of the jump process verifies the hypothesis $A1$ - $A4$. 
Besides, $A3$ can be weakened assuming that the product of the conditional expectation and the density of $X(t)$ 
(or $Z(t)$ in Corollary \ref{cor:pzpj}) is continuous.

We end this section specializing these results to the case where 
$\Zc$ is a Gaussian process, here, the hypothesis of continuity of the densities and of the 
conditional expectation may be released 
since they follow from the conditions of non-degeneracy of the distribution of $Z(t)$, $t\in[0,T]$, 
and on the regularity of the paths. 
Besides, the ingredients in the formulas are computable explicitly.
\begin{corollary}\label{theorem:gaussdensity}
Let $\Zc$ be a Gaussian process with $C^1$ paths such that 
for every $t$ the distribution of $Z(t)$ is non-degenerated,  
assume further that $\Jc$ is a pure jump process independent from $\Zc$ 
with finite intensity of jumps. 
Then the result of Theorem \ref{teo:nogauss} holds true.
\end{corollary}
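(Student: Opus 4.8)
The strategy is to reduce Corollary~\ref{theorem:gaussdensity} to Theorem~\ref{teo:nogauss} by verifying that a Gaussian process $\Zc$ with $C^1$ paths and non-degenerate one-dimensional marginals satisfies the conditions $A1$--$A4$, in the mildly weakened form pointed out in the remark following Theorem~\ref{teo:nogauss}; the jump part needs no further discussion, since $\Jc$ is assumed to be a finite-intensity pure jump process independent of $\Zc$, exactly as required there. I would first recall the standard structural facts (cf.\ \cite{aw}): being Gaussian with a.s.\ $C^1$ paths, $\Zc$ is mean-square differentiable, so the mean $m(t)=\Esp Z(t)$ is $C^1$ on $[0,T]$, the covariance $\Gamma(s,t)=\mathrm{Cov}(Z(s),Z(t))$ has a continuous mixed second partial derivative, and $(Z(t),\dot Z(t'))$ is a Gaussian vector whose mean and covariance depend continuously on $(t,t')\in[0,T]^2$; in particular $\sigma_Z^2(t)=\Gamma(t,t)$, $\mathrm{Cov}(Z(t),\dot Z(t))$ and $\mathrm{Var}(\dot Z(t))$ are continuous in $t$. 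Condition $A1$ is hypothesized, and $A4$ is immediate, since $\dot Z$ has a.s.\ continuous paths on the compact $[0,T]$ and is therefore a.s.\ uniformly continuous, so $w(\dot Z,\delta)\to0$ a.s.\ as $\delta\to0$. Non-degeneracy gives $\sigma_Z^2(t)>0$ for each $t$, hence $\sigma_Z^2$ is bounded below by a positive constant on $[0,T]$ and
\[
p_{Z(t)}(x)=\frac{1}{\sqrt{2\pi}\,\sigma_Z(t)}\exp\!\left(-\frac{(x-m(t))^2}{2\sigma_Z^2(t)}\right)
\]
is jointly continuous in $(t,x)$; since $X(t)=Z(t)+J(t)$ with $Z(t)$ and $J(t)$ independent, $p_{X(t)}(u)=\int_\R p_{Z(t)}(u-w)\,\mu_{J(t)}(dw)$ inherits the continuity needed in the formulas.

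Next I would handle $A3$ and the remaining content of $A2$ by Gaussian regression: write
\[
\dot Z(t)=\alpha(t)\bigl(Z(t)-m(t)\bigr)+R(t),\qquad \alpha(t)=\frac{\mathrm{Cov}(Z(t),\dot Z(t))}{\sigma_Z^2(t)},
\]
where the residual $R(t)$ is Gaussian, independent of $Z(t)$, with mean $\dot m(t)$ and variance $v(t)=\mathrm{Var}(\dot Z(t))-\alpha(t)^2\sigma_Z^2(t)\ge0$, both continuous in $t$ (the denominator being bounded away from $0$). Then, with $N\sim\mathcal N(0,1)$,
\[
\Esp(|\dot Z(t)|\mid Z(t)=x)=\Esp\bigl|\alpha(t)(x-m(t))+\dot m(t)+\sqrt{v(t)}\,N\bigr|,
\]
and since $(a,\sigma)\mapsto\Esp|a+\sigma N|$ is continuous on $\R\times[0,\infty)$, the left-hand side is jointly continuous in $(t,x)$. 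Multiplying by the continuous $p_{Z(t)}$ and using the independence of $\Zc$ and $\Jc$ to pass to $X(t)=Z(t)+J(t)$ (recall $\dot J\equiv0$ between jumps, so $\dot X=\dot Z$ a.e., which is why Theorem~\ref{teo:nogauss} states the two forms) yields the continuity of $\Esp(|\dot X(t)|\mid X(t)=u)\,p_{X(t)}(u)$ and of the integrand of Corollary~\ref{cor:pzpj} required by the weakened $A3$; the same computation also makes $p_{Z(t),\dot Z(t')}$ explicit and jointly continuous whenever the covariance matrix of $(Z(t),\dot Z(t'))$ is invertible.

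The one real obstacle is degeneracy of the pair $(Z(t),\dot Z(t'))$: even with $Z(t)$ non-degenerate, $\dot Z(t')$ may be an affine function of $Z(t)$ — for instance $Z(t)=(t+1)G$ with $G\sim\mathcal N(0,1)$ — so the joint density literally demanded in $A2$ may fail to exist. Here I would lean on the remark after Theorem~\ref{teo:nogauss}: the regression representation shows that the objects its proof actually uses, namely the product $\Esp(|\dot Z(t)|\mid X(t)=u)\,p_{X(t)}(u)$ and its $Z$-analogue, together with the Kac counting limit $\tfrac{1}{2\varepsilon}\int_0^T\indicator{\{|Z(t)-u|<\varepsilon\}}|\dot Z(t)|\,dt$, are well defined and continuous regardless of any degeneracy, and that $\Prob(Z(t)=u,\dot Z(t)=0)=0$ for a.e.\ $t$ follows from the regression formula by a Bulinskaya-type argument, so a.s.\ there are no tangential contacts to corrupt the count. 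Once these points are in place, $\Zc$ satisfies the hypotheses of Theorem~\ref{teo:nogauss} in the form it needs, and the statement of the corollary follows at once; the up- and down-crossing versions are obtained in the same way.
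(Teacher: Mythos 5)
Your proposal is correct and takes essentially the same route as the paper, which gives no separate proof of this corollary beyond the remark preceding it that the continuity hypotheses follow from non-degeneracy of $Z(t)$ and path regularity --- precisely what you verify ($A1$, $A4$ directly; $A2$--$A3$ via Gaussian regression, then invoking Theorem~\ref{teo:nogauss} conditionally on the jump configuration). Your explicit handling of the possible degeneracy of the pair $(Z(t),\dot Z(t'))$, which the literal statement of $A2$ would require and which non-degeneracy of $Z(t)$ alone does not guarantee, is in fact more careful than the paper's one-line justification; the regression-plus-Bulinskaya workaround you supply is the standard argument behind the Gaussian Rice formula in \cite{aw} and is exactly what is needed to close that gap.
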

\begin{corollary}\label{theorem:gauusest}
Under the hypothesis of Corollary \ref{theorem:gaussdensity}, 
if in addition $\Zc$ is stationary, 
then the formula for the continuous crossings reduces to:
\begin{equation*}
\Esp{N^{c}_{u}}=\Esp{|\dot{Z}(0)|} \int^{T}_{0}p_{X(t)}(u)dt.
\end{equation*}
\end{corollary}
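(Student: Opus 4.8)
The plan is to deduce the corollary from the Gaussian case of Rice's formula, i.e.\ from the second line of the display in Theorem~\ref{teo:nogauss},
\[
\Esp N^c_u=\int_{[0,T]}\Esp\bigl(|\dot Z(t)|\mid X(t)=u\bigr)\,p_{X(t)}(u)\,dt,
\]
which is available here by Corollary~\ref{theorem:gaussdensity}. The whole point is to show that, under the stated hypotheses, the conditional expectation inside the integral collapses to the deterministic constant $\Esp|\dot Z(0)|$, after which the factor $\int_0^T p_{X(t)}(u)\,dt$ simply survives.

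First I would record that for a $C^1$ Gaussian process the covariance function $r(s,t)=\mathrm{Cov}(Z(s),Z(t))$ is $C^1$ in each argument, that $\dot Z(t)$ is also the mean-square derivative of $\Zc$, and that consequently $\mathrm{Cov}(Z(t),\dot Z(t))=\tfrac12\,\tfrac{d}{dt}\mathrm{Var}(Z(t))$; hence the constant-variance assumption forces $\mathrm{Cov}(Z(t),\dot Z(t))=0$ for every $t\in[0,T]$. Since $(Z(t),\dot Z(t))$ is jointly Gaussian, this uncorrelatedness is equivalent to independence of $Z(t)$ and $\dot Z(t)$. Next, because $\Jc$ is independent of $\Zc$, the pair $(Z(t),\dot Z(t))$ is independent of $J(t)$; chaining this with $Z(t)\perp\dot Z(t)$ gives that $\dot Z(t)$ is independent of the pair $(Z(t),J(t))$, hence of $X(t)=Z(t)+J(t)$. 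Thus the conditioning is vacuous, $\Esp(|\dot Z(t)|\mid X(t)=u)=\Esp|\dot Z(t)|$, and therefore $\Esp N^c_u=\int_0^T\Esp|\dot Z(t)|\,p_{X(t)}(u)\,dt$.

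To finish, when $\Zc$ is stationary (equivalently, whenever the one-dimensional law of $\dot Z(t)$ does not depend on $t$) we have $\Esp|\dot Z(t)|=\Esp|\dot Z(0)|$ for every $t$, and this constant can be pulled out of the integral, giving the claimed identity. The argument is elementary; the points that deserve a word of care are the interchange legitimising $\mathrm{Cov}(Z(t),\dot Z(t))=\tfrac12\tfrac{d}{dt}\mathrm{Var}(Z(t))$ (standard for $C^1$ Gaussian processes), the passage from ``uncorrelated'' to ``independent'' via joint normality, and the observation that constant variance alone only yields the intermediate identity $\Esp N^c_u=\int_0^T\Esp|\dot Z(t)|\,p_{X(t)}(u)\,dt$ — it is the stationarity that permits factoring out $\Esp|\dot Z(0)|$. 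The analogous formulas for up- and down-crossings follow the same way, with $|\dot Z(t)|$ replaced by $(\dot Z(t))^{\pm}$.
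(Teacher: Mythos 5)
Your proof is correct and follows essentially the same route as the paper, whose entire justification is the one-line remark that for a Gaussian process constant variance forces the independence of $Z(t)$ and $\dot Z(t)$ at each $t$ — you simply spell this out via $\mathrm{Cov}(Z(t),\dot Z(t))=\tfrac12\tfrac{d}{dt}\mathrm{Var}(Z(t))=0$, joint normality, and the independence of $\Jc$ from $\Zc$ to kill the conditioning in the formula of Theorem~\ref{teo:nogauss}. Your closing caveat is well taken and worth keeping: constant variance alone only yields $\Esp N^c_u=\int_0^T\Esp|\dot Z(t)|\,p_{X(t)}(u)\,dt$, and one genuinely needs the one-dimensional law of $\dot Z(t)$ to be time-independent (as under stationarity) to pull the constant $\Esp|\dot Z(0)|$ out of the integral — a point the corollary's statement glosses over.
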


\begin{remark}\label{remark:gauusest}
Let us define the density 
$p(u):=\frac{1}{T}\int^{T}_{0}p_{X(t)}(u)\,dt$. 
Then, we can write
\begin{equation*}
\Esp{N^{c}_{u}}=T\,\Esp{|\dot{Z}(0)|}\,p(u).
\end{equation*}
Observe that $p$ is a mixture of the density of $X(t)$ for $t\in[0,T]$. 
In particular, 
if $\X$ is a stationary process, the density $p$ reduces to that of $X(0)$, just as in the original formula due to Rice.\\
\end{remark}

\section{Examples}
In this section we present two examples. 
In both of them $\Zc$ is a differentiable centered stationary Gaussian process. 
Let $\Gamma$ be the covariance function of $\Zc$, i.e: $\Gamma(\tau)=\Esp Z(0)Z(\tau)$. 
Note that $-\Gamma^{\prime\prime}(0)$ is the variance of $\dot{Z}(0)$. 

The jump epochs $\tau_n$ of $\Jc$ are such that for all $n$, 
$\tau_n-\tau_{n-1}$ are independent exponential random variables with intensity $\lambda<\infty$.
The process $\nu=(\nu_t:t\geq 0)$ with $\nu_t=\max\{n:\tau_n\leq t\}$ is called a Simple Poisson Process.

For the sake of notational simplicity we consider only the up-crossings, the case of down-crossings is 
completely analogous. 
\subsection{Stationary processes with $1$-dimensional Gaussian distribution}
Assume that $\Zc=(Z(t):t\in[0,T])$ is a centered stationary Gaussian process with $C^1$ paths. 
Assume also that $\Gamma(0)=1/2$.

Besides, let $\Jc=(J(t):t\in[0,T])$ be constructed in the following way.
Given a sequence of independent and identically distributed random variables $(\epsilon_n:n\in\N)$ with common centered Gaussian distribution with variance $1/2$, and $\rho\in\R$ such that $|\rho|<1$, define
\begin{equation*}
A_0=\epsilon_0,\textrm{ and for }n\geq 1: A_n=\rho A_{n-1}+\sqrt{1-\rho^2}\,\epsilon_{n}. 
\end{equation*}
Consider also a Simple Poisson Process $\nu=(\nu_t:t>0)$ with intensity $0<\lambda<\infty$. 
Assume that $\nu$ is independent from $(\epsilon_n)_n$. 
Finally, let
\begin{equation*}
J(t)=A_{\nu_{t}}. 
\end{equation*}

We call such a process a Poisson-auto-regressive process and denote it by PAR($\rho,\lambda$). 
Note that this process is similar, in certain sense, to the Random Telegraph Signal.

The following proposition is obtained by a direct computation conditioning 
on the number of jumps of $\Jc$. 

\begin{proposition}\label{prop:par}
Let $\Jc$ be a PAR($\rho,\lambda$) process, then $\Jc$ is wide-sense stationary 
and for $\tau\in[0,T]$ the r.v. $J(\tau)$ has centered Gaussian distribution with variance $1/2$ 
and the covariance between $J(0)$ and $J(\tau)$ is $e^{-\lambda(1-\rho)\tau}/2$.
\end{proposition}

The next theorem gives the mean number of continuous and discontinuous crossings through the level $u$ by 
the process $\X$ on the interval $[0,T]$. Let $\varphi$ denote the standard Gaussian density function.
Considering $(X,Y)$ a two dimensional centered Gaussian vector with $\Esp{X^2}=\Esp{Y^2}=1$ and $\Esp{XY}=(1+\rho)/2$, 
introduce
\begin{equation}\label{eq:perro}
P_{\rho}(u)=\Prob{(X<u,Y>u)},
\end{equation}
i.e. the probability that a two dimensional centered Gaussian vector with unit variances and covariance $(1+\rho)/2$ belongs to the set $(-\infty,u)\times(u,\infty)$.

\begin{theorem}\label{theorem:poiss-ar}
Let $\X=\Zc+\Jc$ with $\Zc,\Jc$ independent processes such that  
$\Zc$ is a centered  stationary Gaussian process with $\Gamma(0)=1/2$ and $C^1$ paths and $\Jc$ a PAR($\rho,\lambda$) process. 
Then 
\begin{equation*}
\Esp U^{c}_{u}=T\sqrt{\frac{-\Gamma^{\prime\prime}(0)}{2\pi}}\,\varphi(u),\quad
\Esp U^{d}_{u}=\lambda T\,P_{\rho}(u).
\end{equation*}
\end{theorem}
\begin{proof}
We begin considering the mean number of continuous crossings. Since $\Zc$ is Gaussian and stationary, 
we can apply Corollary \ref{theorem:gauusest}.
Besides, 
by Proposition \ref{prop:par}, the density $p_{X(t)}=p_{X{(0)}}=\varphi$ for all $t$. 
Therefore,
\begin{equation*}
\Esp U^{c}_{u}=T\,\Esp {\dot{Z}^{+}(0)}\,\varphi(u).
\end{equation*}
Routine computations shows that 
${\Esp{\dot{Z}}^{+}(0)=\sqrt{{-\Gamma^{\prime\prime}(0)}/{2\pi}}}$.
This gives $\Esp{U^c_u}$.

Let us consider now the discontinuous up-crossings. 
The compensating measure of the point process $(\tau_{n},\xi_{n})_{n}$, $\xi_n:=A_n-A_{n-1}$, 
is $\lambda dt\,F(dy)$, where $F$ 
is the normal distribution centered at $(\rho-1) A_{\nu_{t^{-}}}$ and with variance $(1-\rho^{2})/2$, 
see \cite[eq. 4.64]{Jac}. 
Hence
\begin{multline*}
\Esp{U^{d}_{u}}=\Esp\int^{T}_{0}\int_{\R}\indicator\{X(t^{-})<u,X(t^{-})+y>u\}\lambda dt\,F(dy)\\
=\lambda\Esp\int^{T}_{0}dt\int_{\R}\indicator\{X(t^{-})<u,X(t^{-})+y>u\}F(dy).
\end{multline*}
Actually $F$ is the distribution of $\xi_{\nu_t}$ conditioned on the random vector 
$(Z(t),A_{\nu_{t^{-}}})$. 
Thus
\begin{multline*}
\Esp{U^{d}_{u}}=\lambda\Esp\int^{T}_{0}\Prob\left(X(t^{-})<u,X(t^{-})+\xi_{\nu_t}>u\mid Z(t),A_{\nu_{t^{-}}}\right)dt\\
=\lambda\int^{T}_{0}\sum^{\infty}_{n=0}p_{\nu_{t}}(n)
\Esp\Prob\left(Z(t)+A_{n}<u,Z(t)+A_{n}+\xi_{n+1}>u\mid Z(t),A_{n}\right)dt\\
=\lambda\int^{T}_{0}\sum^{\infty}_{n=0}p_{\nu_{t}}(n)
\Prob\left(Z(t)+A_{n}<u,Z(t)+A_{n}+\xi_{n+1}>u\right)dt
\end{multline*}
where we have conditioned on the number of jumps and used Fubini's theorem.
Note that in this case a direct computation is possible. 
By the stationarity of $\Zc$ and $\Jc$, the last probability does not depend on $t$ and $n$. Hence
\begin{equation*}
\Esp{U^{d}_{u}}
=\lambda T\Prob\left(Z(0)+A_{0}<u,Z(0)+A_{0}+\xi_1>u\right)
=\lambda TP_{\rho}(u).
\end{equation*}
This gives $\Esp U^d_{u}$ and concludes the proof.
\end{proof}
Now we can see that the continuous crossings dominate, in mean, for high levels. 

Let $\Phi$ be the standard Gaussian distribution and $\overline{\Phi}(z)=1-\Phi(z)$. 
As usual, $f(u)=o(g(u))$ means that $\lim_{u\to\infty}f(u)/g(u)=0$.
\begin{corollary}\label{cor:maxpar}
As $u\to\infty$, we have $\Esp{U^{d}_{u}}=o\left(\Esp{U^{c}_{u}}\right)$. 
\end{corollary}
\begin{proof}
The results follows directly from \eqref{eq:perro}:
\begin{equation*}
P_{\rho}(u)\leq\Prob(Y>u)\leq 
\overline{\Phi}\left(u\right)=o\left(\varphi(u)\right).
\end{equation*}
\end{proof}

\begin{remark}
The processes $\Zc$ and $\Jc$ contribute in $1/2$ to the variance of $\X$. 
More generally we can define 
\begin{equation*}
\X=\alpha\Zc+\sqrt{1-\alpha^2}\Jc
\end{equation*}
for $\alpha\in[0,1]$ and $\Zc,\Jc$ independent, centered, stationary with variance one.
Note that $p_{X(t)}=\varphi$ for all $\alpha$ 
and $-\Gamma_{\alpha Z(0)}^{\prime\prime}(0)=-\alpha^2\Gamma_{Z(0)}^{\prime\prime}(0)$. Hence
\begin{equation*}
\Esp{N^{c}_{u}}(\X)
=\alpha\,\Esp{N^{c}_{u}}(\Zc).
\end{equation*}
Therefore, the mean number of continuous crossings of $\X$ is proportional to that of the continuous process $\Zc$, 
the constant of proportionality being $\alpha$, 
that is, the standard deviation of the first summand $\alpha\Zc$.
\end{remark}

\subsection{Stationary Gaussian continuous process plus CPP}
In this example we consider a centered stationary Gaussian process $\Zc$ with $\Gamma(0)=1$ and $C^1$ paths and 
an independent Compound Poisson Process (CPP), $\Jc$ with finite intensity $\lambda$ and standard Gaussian jumps. 

The process $\Jc$ is defined as follows: 
let $(\xi_{n})_{n\in\N}$ be independent standard Gaussian random variables 
and let $\nu=(\nu_t:t\geq 0)$ be a Simple Poisson Srocess  
independent from $(\xi_n)$. 
Then, for $t\geq 0$ define 
$J(t):=\sum^{\nu_{t}}_{n=1}\xi_{n}$. 

Thus, $J(t)$ is the sum of a random number of standard normal random variables. 
It is well known, see \cite{Jac} for instance, that the compensating measure of the CPP is $\lambda dt\,\Phi(dx)$. 
In particular, the compensating measure is deterministic.

The next theorem gives the mean number of up-crossings through $u$ by $\X$. 
Let $\varphi_n$ denote the centered Gaussian density with variance $n$, 
in particular, $\varphi_1=\varphi$. 
Note that $\varphi_n\ast \varphi_m=\varphi_{n+m}$, where $\ast$ stands for the convolution. 
Furthermore, let $p=\sum^{\infty}_{n=1}p_{n}\varphi_{n}$ with
$p_n=\frac{1}{\lambda T}\Prob(\nu_T\geq n)$ 
and note that the density function $p$ has expectation $0$ and variance $\lambda T/2$. 
\begin{theorem}\label{theoremcppcont}
For the process defined above we have
\begin{equation*}
\Esp{U^{c}_{u}}=T\,\sqrt{\frac{-\Gamma^{\prime\prime}(0)}{2\pi}}\,p(u),\quad
\Esp{U^{d}_{u}}=\lambda T\int^{u}_{-\infty} \overline{\Phi}(u-x)\,p(x)dx.
\end{equation*}
\end{theorem}
Observe that the mean number of continuous crossings has the usual form, see Remark \ref{remark:gauusest}.
\begin{proof}
We begin with the continuous crossings. 
Using the formula in Corollary \ref{cor:pzpj} we have
\begin{multline*}
\Esp{U^{c}_{u}}=\int^{T}_{0}dt\int^{\infty}_{-\infty}\Esp\left(\dot{Z}^{+}(t)\mid Z(t)=v\right)p_{Z(t)}(v)p_{J(t)}(u-v)dv\\
=\Esp\dot{Z}^{+}(0)\int^{T}_{0}dt\int^{\infty}_{-\infty}p_{Z(0)}(v)p_{J(t)}(u-v)dv,
\end{multline*}
where we used the stationarity of $\Zc$ as in Corollary \ref{theorem:gauusest}. 
Recall that $\Esp{\dot{Z}^{+}(0)}=\sqrt{-\Gamma^{\prime\prime}(0)/2\pi}$. 
Now, by Lemma \ref{lema} we decompose $p_{J(t)}$. Thus
\begin{multline*}
\Esp{U^{c}_{u}}
=\sqrt{\frac{-\Gamma^{\prime\prime}(0)}{2\pi}}\int^{T}_{0}dt\sum^{\infty}_{n=0}p_{\nu_t}(n)\int^{\infty}_{-\infty}p_{Z(0)}(v)\varphi_{n}(u-v)dv\\
=\sqrt{\frac{-\Gamma^{\prime\prime}(0)}{2\pi}}\int^{T}_{0}dt\sum^{\infty}_{n=0}p_{\nu_t}(n)(p_{Z(0)}\ast \varphi_{n})(u)\\
=\sqrt{\frac{-\Gamma^{\prime\prime}(0)}{2\pi}}\sum^{\infty}_{n=0}\varphi_{n+1}(u)\int^{T}_{0}p_{\nu_t}(n)dt.
\end{multline*}
This integral is computed in the second item of Lemma \ref{lema}. So we get
\begin{equation*}
\Esp{U^{c}_{u}}
=T\;\sqrt{\frac{-\Gamma^{\prime\prime}(0)}{2\pi}}\sum^{\infty}_{n=0}\frac{1}{\lambda T}\Prob\left(\nu_T\geq n+1\right)\varphi_{n+1}(u)
=T\;\sqrt{\frac{-\Gamma^{\prime\prime}(0)}{2\pi}}\;p(u).
\end{equation*}

Now we turn to the discontinuous crossings
\begin{multline*}
\Esp{U^{d}_{u}}=\Esp{\int^{T}_{0}\int^{\infty}_{-\infty}
\indicator{\{X(t^-)<u;X(t^-)+y>u\}}L(dt,dy)}\\
=\Esp{\int^{T}_{0}\int^{\infty}_{-\infty}
\indicator{\{X(t^-)<u;X(t^-)+y>u\}}\lambda dt\,\Phi(dy)}.
\end{multline*}
Since the compensating measure is deterministic we have
\begin{multline*}
\Esp{U^{d}_{u}}
=\lambda\int^{T}_{0}\int^{\infty}_{-\infty}
\Prob{\left(X(t^-)<u;X(t^-)+y>u\right)}dt\,\Phi(dy)\\
=\lambda\int^{T}_{0}\Prob{\left(X(t^-)<u;X(t^-)+\xi>u\right)}dt,
\end{multline*}
being $\xi$ a standard normal variable independent from $X(t^{-})$. 
Now we condition on $X(t^{-})$, since for fixed $t$, almost surely $J(t^{-})=J(t)$ we may use the same computations as in 
the computation of $\Esp{N^{c}_{u}}$. Hence
\begin{multline*}
\Esp{U^{d}_{u}}
=\lambda\int^{T}_{0}\int^{u}_{-\infty}\Prob{\left(\xi>u-x\mid X(t^-)=x\right)}\,p_{X(t^-)}(x)dxdt\\
=\lambda\int^{u}_{-\infty}\overline{\Phi}(u-x)dx\int^{T}_{0}p_{X(t^{-})}(x)dt\\
=\lambda T\int^{u}_{-\infty} \overline{\Phi}(u-x)p(x)dx.
\end{multline*}
This completes the proof.
\end{proof}

Next, we compare the mean numbers of continuous and discontinuous up-crossings through the level $u$ by $\X$ as $u\to\infty$.
\begin{corollary}
As  $u\to\infty$ the mean numbers of continuous and 
discontinuous up-crossings through the level $u$ are of the same order. 
More precisely,
\begin{equation*}
\lim_{u\to\infty} \frac{\lambda}{\sqrt{-\Gamma^{\prime\prime}(0)}}\,\Esp{U^{c}_{u}}
\leq\lim_{u\to\infty}\Esp{U^{d}_{u}}
\leq\lim_{u\to\infty} \lambda\,\sqrt{\frac{2\pi}{-\Gamma^{\prime\prime}(0)}}\,\Esp{U^{c}_{u}}
\end{equation*}
\end{corollary}
\begin{proof}
We bound $\Esp{U^{d}_{u}}$ from above and below.
First, we have
\begin{equation*}
\int^{u}_{-\infty}\overline{\Phi}(u-y)p(y)dy=
\int^{\infty}_{0}\overline{\Phi}(y)p(u-y)dy
\geq p(u)\int^{2u}_{0}\overline{\Phi}(y)dy,
\end{equation*}
where we used that $p$ is even and decreasing on $[0,\infty)$. 
Furthermore,
\begin{equation*}
\lim_{u\to\infty}\int^{2u}_{0}\overline{\Phi}(y)dy=
\int^{\infty}_{0}\overline{\Phi}(y)dy
=\Esp{\xi^{+}}=\sqrt{\frac{2}{\pi}}.
\end{equation*}
Therefore,
\begin{equation*}
\Esp{U^{d}_{u}}\geq \lambda T\,p(u)\int^{\infty}_{0}\overline{\Phi}(y)dy 
\mathop{\sim}\limits_{u\to\infty} \frac{\lambda T}{\sqrt{2\pi}} p(u).
\end{equation*}

On the other hand,
\begin{multline*}
\Esp{U^{d}_{u}}=\lambda T\sum^{\infty}_{n=1}p_{n}\int^{\infty}_{0}\overline{\Phi}(u-y)\varphi_{n}(y)dy
\leq \lambda T\sum^{\infty}_{n=1}p_{n}\int^{\infty}_{-\infty}\overline{\Phi}(u-y)\varphi_{n}(y)dy.
\end{multline*}
Note that this is a convolution formula for the tail of the 
distribution of two (independent) random variables, say $Z\sim\Phi$ and $V_{n}\sim \varphi_{n}$.  
Then the last integral equals $\Prob{(Z+V_n>u)}$. 
Furthermore, since $\Phi$ is the Gaussian standard distribution and $\varphi_{n}$ is the Gaussian density with zero mean and variance $n$, 
this probability equals $\Prob{(V_{n+1}>u)}=\overline{\Phi}(u/\sqrt{n+1})$. 
Thus
\begin{equation*}
\Esp{U^{d}_{u}}\leq \lambda T\sum^{\infty}_{n=1}p_{n}\overline{\Phi}\left(\frac{u}{\sqrt{n+1}}\right)
\leq \lambda T\sum^{\infty}_{n=1}(n+1)p_{n}\varphi_{n+1}(u)=\lambda T(p(u)+\delta(u)),
\end{equation*}
where $\delta(u):=\sum^{\infty}_{n=1}(n+1)p_{n}\varphi_{n+1}(u)-p(u)$. 
In order to obtain the desired result, it suffices to prove that $\delta(u)\to 0$ as $u\to\infty$. 
In fact,
\begin{equation*}
|\delta(u)|\leq\left|\sum^{\infty}_{n=1}\left[(n+1)p_{n}-p_{n+1}\right]\varphi_{n+1}(u)\right|+|p_{1}\varphi_{1}(u)|
\end{equation*}
It is clear that the second term of the r.h.s  tends to zero when $u\to\infty$. 
Let us look at the first term, 
note that $\sum^{\infty}_{n=1}(n+1)p_{n}$ is, roughly speaking, the second factorial moment of the Poisson distribution. 
In fact,
\begin{multline*}
\sum^{\infty}_{n=1}(n+1)p_{n}
=1+\frac{1}{\lambda T}\sum^{\infty}_{n=1}\sum^{\infty}_{k=n}np_{\nu_{T}}(k)
=1+\frac{1}{2\lambda T}\sum^{\infty}_{k=1}k(k-1)p_{\nu_{T}}(k)<\infty.
\end{multline*}
Thus, the first term is a mixture of Gaussian densities $\varphi_n$ (times a constant), 
hence it tends to zero.
Then $\delta(u)\to 0$ as $u\to\infty$ and
\begin{equation*}
\Esp{U^{d}_{u}}\leq\lambda T(p(u)+\delta(u))\mathop{\sim}\limits_{u\to\infty}\lambda Tp(u).
\end{equation*}
Now, the result follows putting together the two obtained bounds for $\Esp{U^{d}_{u}}$ and using Theorem \ref{theoremcppcont}.
\end{proof}

\begin{remark}
Note that in this case the discontinuous crossings through the level $u$ are not negligible w.r.t. 
the continuous crossings when $u$ tends to infinity, in contrast with the situation in the example of the previous section. 
\end{remark}
We end with an auxiliary lemma.
\begin{lemma}\label{lema}
Let $\Jc$ be the CPP with standard Gaussian jumps, then
\begin{enumerate}
\item the density of the CPP can be written as: 
$p_{J(t)}(x)=\sum^{\infty}_{n=1}p_{\nu_t}(n)\varphi_{n}(x)$.

\item $\int^{T}_{0}p_{\nu_t}(n)dt=\frac{1}{\lambda} \Prob(\nu_T\geq n+1)
=\frac{1}{\lambda}\Prob(\tau_{n+1}\leq T)$.

\item If $p_{n}=\frac{1}{\lambda T}\Prob(\nu_T\geq n)$, then $\sum^{\infty}_{n=1}p_{n}=1$
\end{enumerate}
\end{lemma}
\begin{proof}
1. Conditioning on the value of $\nu_{t}$ we have:
\begin{equation*}
F_{J(t)}(x)=\Prob(J(t)\leq x)=\sum^{\infty}_{n=0}p_{\nu_{t}}(n)\Prob(J(t)\leq x\mid\nu_{t}=n).
\end{equation*}
Now, if $\nu_{t}=n$, $J(t)$ is the sum of $n$ independent standard Gaussian random variables, hence, the conditional probability 
in the r.h.s. of the latter equation is the distribution of centered normal random variable with variance $n$. 
The result follows taking derivatives on both sides.

2. By definition, $p_{\nu_t}(n)$, as a function of $t$, 
is equal to the density function of the Gamma distribution with parameters $(\lambda,n+1)$ divided by $\lambda$,  
it is well known that this is the distribution of $\tau_{n+1}$, 
hence
\begin{equation*}
\int^{T}_{0}p_{\nu_t}(n)dt=\frac{\Prob(\tau_{n+1}\leq T)}{\lambda}.
\end{equation*}
The result follows since the events $\{\tau_{n+1}\leq T\}$ and $\{\nu_T\geq n+1\}$ coincide.

3. It follows directly from the facts that $\nu_{T}$ has Poisson distribution with mean $\lambda T$ 
and that for a non-negative integer valued random variable $X$: $\Esp{X}=\sum^{\infty}_{n=1}\Prob(X\geq n)$.
\end{proof}

\subsection{Application to the distribution of the maximum}
Recall that
\begin{equation*}
M(T)=\max\{X(s):0\leq s\leq T\}. 
\end{equation*}
We now use Rice Formula to get upper and lower bounds for 
$\Prob(M(T)>u)$ where $\X$ has differentiable stationary Gaussian continuous part 
and Poisson-auto-regressive jump part defined in Section 4.1. 

The following bounds are based on the elementary relation
\begin{equation*}
\{M(T)>u\}=\{X(0)>u\}\biguplus\,\{X(0)<u,U_{u}\geq 1\},
\end{equation*}
where $\uplus$ denotes the disjoint union. 
It follows that
\begin{equation}\label{markova}
\Prob(M(T)>u)\leq\Prob(X(0)>u)+\Prob(U_{u}\geq 1)
\leq\Prob(X(0)>u)+\Esp{U_{u}},
\end{equation}
and
\begin{multline}\label{markovb}
\Prob(M(T)>u)=\Prob(X(0)>u)+\Prob(U_u\geq 1)-\Prob(U_u\geq 1,X(0)>u)\\
\geq\Prob(X(0)>u)+\Esp{U_{u}}-\frac{1}{2}\Esp{U_{u.[2]}}-\Prob(U_u\geq 1,X(0)>u),
\end{multline}
where $a_{[2]}=a(a-1)$ is the Pochammer symbol and $U_{u.[2]}=(U_u)_{[2]}$

The following theorem contains the upper bound for the tail of the distribution of the maximum 
of $\X$ on the interval $[0,T]$.
\begin{theorem}\label{theorem:maxpar}
As $u\to\infty$, the tail of the 
distribution of the maximum verifies
\begin{equation}\label{eq:theoremmaxpar}
\Prob(M(T)>u)\leq 1-\Phi(u)+T\,\sqrt{\frac{{-\Gamma^{\prime\prime}(0)}}{{2\pi}}}\varphi(u)+
\lambda TP_{\rho}(u),
\end{equation}
where $P_{\rho}(u)$ is defined in \eqref{eq:perro}.
\end{theorem}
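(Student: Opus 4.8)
The plan is to obtain \eqref{eq:theoremmaxpar} directly by combining the elementary inequality \eqref{markova} with the explicit evaluation of $\Esp U_u$ furnished by Theorem \ref{theorem:poiss-ar}; the only computation not already available is the identification of the marginal law of $X(0)$.

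First I would record that law. Since $\X=\Zc+\Jc$ with $\Zc$ and $\Jc$ independent, we have $X(0)=Z(0)+J(0)$ with independent summands. By hypothesis $Z(0)$ is centered Gaussian with variance $\Gamma(0)=1/2$, and by the construction of the Poisson-auto-regressive process $J(0)=A_0=\xi_0$ is centered Gaussian with variance $1/2$; hence $X(0)$ is standard Gaussian, so $\Prob(X(0)>u)=1-\Phi(u)$. Equivalently, this is the instance $t=0$ of Proposition \ref{prop:par} together with $\Gamma(0)=1/2$.

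Next I would invoke \eqref{markova}: from $\{M(T)>u\}=\{X(0)>u\}\uplus\{X(0)<u,\,U_u\geq1\}$ (valid up to the $\Prob$-null event $\{X(0)=u\}$), Markov's inequality gives $\Prob(M(T)>u)\leq\Prob(X(0)>u)+\Esp U_u$. Substituting $\Prob(X(0)>u)=1-\Phi(u)$ and the expression $\Esp U_u=T\sqrt{\lambda_2/(2\pi)}\,\varphi(u)+\lambda T\,p_\rho(u)$ from Theorem \ref{theorem:poiss-ar} yields exactly \eqref{eq:theoremmaxpar}. I would also note that this chain holds for every $u\in\R$, so the bound is in fact valid at all levels; the phrase ``as $u\to\infty$'' merely marks the regime in which the estimate is informative, and, if desired, one can add, using the proof of Corollary \ref{cor:maxpar} and the standard tail bound $1-\Phi(u)\leq\varphi(u)/u$, that the right-hand side is asymptotically equivalent to $T\sqrt{\lambda_2/(2\pi)}\,\varphi(u)$.

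I do not expect a genuine obstacle here, since the heavy lifting was done in Theorem \ref{theorem:poiss-ar}. The one point deserving a line of justification is the set identity behind \eqref{markova} --- that on $\{X(0)<u\}$ the occurrence of $\{M(T)>u\}$ forces at least one up-crossing, which uses that the paths are c\`{a}dl\`{a}g and $C^1$ between jumps, so that a transition from a value below $u$ to a value above $u$ occurs either as a continuous up-crossing or as a jump across $u$ --- but this is already the content of the decomposition displayed immediately before \eqref{markova}, so the proof reduces to assembling the two ingredients above.
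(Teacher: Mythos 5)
Your proof is correct and follows exactly the paper's route: the paper's own proof is a one-line appeal to Theorem \ref{theorem:poiss-ar} and formula \eqref{markova}, and your write-up simply makes explicit the identification of $X(0)$ as standard Gaussian and the substitution of $\Esp U_u$. No gaps.
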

\begin{proof}
The proof is a direct consequence of Theorem \ref{theorem:poiss-ar} and formula \eqref{markova}.  
\end{proof}
\begin{remark}\label{remark:maxpar}
Furthermore, if we denote the r.h.s. of \eqref{eq:theoremmaxpar} by $rhs(u)$, 
by Corollary \ref{cor:maxpar} we have:
\begin{equation*}
rhs(u)\mathop{\sim}\limits_{u\to\infty}T\,\sqrt{\frac{{-\Gamma^{\prime\prime}(0)}}{{2\pi}}}\;\varphi(u).
\end{equation*}
\end{remark}

The goal of the rest of this section is to show that this upper bound is sharp. 
In order to do that, we use the lower bound given in equation \eqref{markovb} 
for the tail of the 
distribution of the maximum $M(T)$. 
So we have to deal with the second moment of the number up-crossings. 
\begin{theorem}
Let the processes $\Zc,\Jc$ and $\X$ be as in Corollary \ref{theorem:gauusest}. 
If in addition $\Jc$ is a PAR($\rho,\lambda$) process, 
and $\Zc$ verifies that $\Gamma(\tau)\neq\pm 1/2$ for all $\tau>0$ and the Geman condition:
\begin{equation}\label{geman}
\int^\eta_0\frac{\theta^{\prime}(\tau)}{\tau^{2}}dt\textrm{ converges for some }\eta>0,
\end{equation}
where $\theta$ is defined by $\Gamma(\tau)=1+\Gamma^{\prime\prime}(0)\tau^{2}/2+\theta(\tau)$. 
Then
\begin{equation*}
\Prob(M(t)>u)= 1-\Phi(u)+T\,\sqrt{\frac{-\Gamma^{\prime\prime}(0)}{2\pi}}\,\varphi(u)+o(\varphi(u))
\end{equation*}
\end{theorem}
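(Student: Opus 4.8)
The plan is to squeeze $\Prob(M(T)>u)$ between two matching bounds. The upper bound is already available: by Theorem~\ref{theorem:maxpar} and Remark~\ref{remark:maxpar}, together with $1-\Phi(u)=o(\varphi(u))$ and $\lambda T\,p_\rho(u)=o(\varphi(u))$ (which follows from Corollary~\ref{cor:maxpar}), one has $\Prob(M(T)>u)\le 1-\Phi(u)+T\sqrt{\lambda_2/(2\pi)}\,\varphi(u)+o(\varphi(u))$. For the lower bound I would use \eqref{markovb}: since $\Prob(X(0)>u)-\Prob(U_u\ge1,X(0)>u)\ge0$ and, by Theorem~\ref{theorem:poiss-ar}, $\Esp U_u=T\sqrt{\lambda_2/(2\pi)}\,\varphi(u)+\lambda T\,p_\rho(u)=T\sqrt{\lambda_2/(2\pi)}\,\varphi(u)+o(\varphi(u))$, inequality \eqref{markovb} yields $\Prob(M(T)>u)\ge T\sqrt{\lambda_2/(2\pi)}\,\varphi(u)-\tfrac12\Esp U_{u.[2]}+o(\varphi(u))$. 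Since $1-\Phi(u)=o(\varphi(u))$, both bounds coincide up to $o(\varphi(u))$ as soon as
\begin{equation*}
\Esp U_{u.[2]}=\Esp\,U_u(U_u-1)=o(\varphi(u)),\qquad u\to\infty,
\end{equation*}
so the whole proof reduces to this single estimate.

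To prove it I would split $U_u=U_u^c+U_u^d$ and expand $U_u(U_u-1)=U_u^c(U_u^c-1)+2U_u^cU_u^d+U_u^d(U_u^d-1)$, conditioning throughout on the jump process, so that $\X=\Zc+A_i$ on each inter-jump interval $I_i:=[\tau_i,\tau_{i+1})$ and, given the jumps, $\X$ is Gaussian with piecewise-constant mean $t\mapsto A_{\nu_t}$. The \emph{discontinuous} term is easy: a discontinuous up-crossing at $\tau_k$ forces $X(\tau_k)>u$, and conditionally on the jump times $X(\tau_k)\sim N(0,1)$, so a union bound over ordered pairs of jumps gives $\Esp U_u^d(U_u^d-1)\le\Esp[\nu_T(\nu_T-1)]\,(1-\Phi(u))=\lambda^2T^2(1-\Phi(u))=o(\varphi(u))$. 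For the \emph{mixed} term, writing $U_v(\Zc;I)$ (resp.\ $N_v(\Zc;I)$) for the number of up-crossings (resp.\ crossings) of level $v$ by $\Zc$ on $I$, one has $U_u^cU_u^d\le\sum_{i,k}U_{u-A_i}(\Zc;I_i)\,\indicator\{Z(\tau_k)>u-A_k\}$. When $i\ne k$ the two factors force $\Zc$ to be near two distinct high levels, and a conditional Cauchy--Schwarz bound, using $\Gamma(\tau)\ne\pm1/2$ to control the relevant conditional variances, makes these terms exponentially smaller than $\varphi(u)$. When $i=k$, the event $\{Z(\tau_k)>u-A_k\}$ together with a later up-crossing of $u-A_k$ in $I_k$ forces a down-crossing of that level as well, whence $U_{u-A_k}(\Zc;I_k)\,\indicator\{Z(\tau_k)>u-A_k\}\le\tfrac12\,N_{u-A_k}(\Zc;[0,T])\big(N_{u-A_k}(\Zc;[0,T])-1\big)$; the classical second factorial moment bound for crossings of the stationary Gaussian $\Zc$ under Geman's condition \eqref{geman} bounds the right-hand side by $o(e^{-(u-A_k)^2})$, and averaging the shift $A_k\sim N(0,1/2)$ against its Gaussian density (with $\nu_T$ independent of $(A_n)$ and $\Esp\,\nu_T<\infty$) turns this into $o(\varphi(u))$.

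The real work is the \emph{continuous} term. Conditionally on the jumps, the second-moment Rice formula for Gaussian processes, applied on each $I_i$ and between pairs of intervals, gives
\begin{multline*}
\Esp\big[U_u^c(U_u^c-1)\mid\Jc\big]\\
=\int_0^T\!\!\int_0^T\Esp\!\big[\dot Z(s)^+\dot Z(t)^+\mid Z(s)=u-A_{\nu_s},\,Z(t)=u-A_{\nu_t}\big]\,p_{Z(s),Z(t)}\big(u-A_{\nu_s},u-A_{\nu_t}\big)\,ds\,dt .
\end{multline*}
On the diagonal part $\{\nu_s=\nu_t\}$ the integrand is that of the ordinary second-moment Rice formula for $\Zc$ at the single level $u-A_i$, integrable by Geman's condition and bounded in total by $\Esp U_{u-A_i}(\Zc;[0,T])\big(U_{u-A_i}(\Zc;[0,T])-1\big)=o(e^{-(u-A_i)^2})$, which averages to $o(\varphi(u))$ just as above. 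On the off-diagonal part $\{\nu_s\ne\nu_t\}$ one uses $\Gamma(\tau)\ne\pm1/2$ for $\tau\ne0$ to keep $\big(\Gamma(0)^2-\Gamma(t-s)^2\big)^{-1/2}$ bounded for pairs of times in distinct non-adjacent intervals, so that the bivariate Gaussian factor is of order $\exp\!\big(-\tfrac{(u-A_{\nu_s})^2+(u-A_{\nu_t})^2}{2(\Gamma(0)+|\Gamma(t-s)|)}\big)$ with $\Gamma(0)+|\Gamma(t-s)|<1$; integrating the two jointly Gaussian, $\mathrm{AR}(1)$-correlated shifts against their law then gives something of order $o(\varphi(u))$, because $|\rho|<1$ keeps their correlation bounded away from $1$ and the average of a product of two such exponentials is of strictly smaller exponential order than $\varphi(u)$. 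The remaining pairs of times, lying in adjacent intervals where $\Gamma(t-s)$ can be close to $\Gamma(0)$, are absorbed into the diagonal analysis by merging $I_i\cup I_{i+1}$ into one interval for $\Zc$: a careful look at the near-diagonal behaviour shows the integrand is killed by the factor $\exp\!\big(-(A_i-A_{i+1})^2/(2\lambda_2(t-s)^2)\big)$ coming from the Gaussian density evaluated at two different levels, and what survives is again controlled by the classical bound for $\Zc$ after averaging over the shifts. Adding the three estimates gives $\Esp U_{u.[2]}=o(\varphi(u))$ and hence the theorem.

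The step I expect to be the main obstacle is precisely this last one, the off-diagonal continuous term. The difficulty is that the effective levels $u-A_{\nu_s}$ can be large and must be integrated out against the correlated Gaussian law of the jumps rather than bounded pathwise; it is exactly this averaging, played against the exponential decay of the bivariate Gaussian density at the two high levels, that upgrades the bound from the $O(\varphi(u))$ given by crude estimates (such as Cauchy--Schwarz applied to $\Esp[U_vU_w]$) to the required $o(\varphi(u))$, all while keeping under control the near-diagonal integrability supplied by Geman's condition.
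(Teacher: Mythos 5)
Your overall skeleton is the paper's: reduce everything to $\Esp U_{u,[2]}=o(\varphi(u))$ via \eqref{markovb}, split $U_{u,[2]}=U^{c}_{u,[2]}+U^{d}_{u,[2]}+2U^{c}_{u}U^{d}_{u}$, and kill each piece separately. Two of your sub-arguments are genuinely simpler than the paper's. Observing that $0\le\Prob(X(0)>u)-\Prob(U_u\ge1,X(0)>u)\le 1-\Phi(u)=o(\varphi(u))$ disposes of the boundary term at once, whereas the paper proves $\Prob(X(0)>u,U_u\ge1)=O(\varphi((1+\delta)u))$ by a time-reversal argument; and your union bound $\Esp U^{d}_{u,[2]}\le\Esp[\nu_T(\nu_T-1)]\,(1-\Phi(u))$ replaces the paper's Plackett--Slepian comparison. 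Conversely, for the mixed term you are working too hard: the paper simply applies Cauchy--Schwarz, $\Esp U^{c}_{u}U^{d}_{u}\le\sqrt{\Esp(U^{c}_{u})^{2}\,\Esp(U^{d}_{u})^{2}}$, and since the first factor is $O(\varphi(u))$ while the second is $o(\varphi(u))$, the product is $o(\varphi(u)^{2})$ and the square root is $o(\varphi(u))$ --- no interval-by-interval analysis is needed.

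The real divergence, and the place where your sketch has a gap, is the off-diagonal continuous term. By conditioning on the jump \emph{values} $A_{\nu_s},A_{\nu_t}$ you are forced to control a bivariate Gaussian density of $\Zc$ evaluated at two random levels $u-A_{\nu_s}\neq u-A_{\nu_t}$ with $t-s$ possibly small (adjacent intervals), which is exactly the degeneracy you then try to patch by ``merging intervals'' and invoking a factor $\exp(-(A_i-A_{i+1})^2/2\lambda_2(t-s)^2)$; as written this is not a proof, and the required uniformity of the various $o(\cdot)$ bounds in the random level before averaging over the $A_k$ is also left unaddressed. The paper's device sidesteps all of this: condition only on the \emph{number} $n$ of jumps in $(s,t]$ and keep the Gaussian shifts inside the process, so that $(X(s),X(t))=(Z(s)+A_m,Z(t)+A_{m+n})$ is exactly bivariate normal with unit variances and covariance $\Gamma(t-s)+\rho^{n}/2$. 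For $n\ge1$ this covariance is $\le\Gamma(0)+|\rho|/2=(1+|\rho|)/2<1$ \emph{uniformly in} $t-s$, so the density at $(u,u)$ is $\exp\{-u^{2}/(1+\Gamma(\tau)+\rho^{n}/2)\}\le\exp\{-2u^{2}/(3+|\rho|)\}=o(\varphi(u))$ with no near-diagonal issue, and the regression bound on $\Esp(\dot Z^{+}(0)\dot Z^{+}(\tau)\mid\cdot)$ stays bounded; only the $n=0$ stratum remains, which is the classical one-level second-moment estimate under Geman's condition. You should adopt this integration-over-the-jump-values step; with it, your outline closes, and without it the adjacent-interval contribution is not actually controlled.
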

\begin{proof}
It suffices to show that the additional terms in \eqref{markovb}, w.r.t. \eqref{markova}, are $o(\varphi(u))$. 
Since 
\begin{equation*}
U_{u,[2]}=U_{u}(U_{u}-1)=U^{c}_{u,[2]}+U^{d}_{u,[2]}+2\,U^{c}_{u}\,U^{d}_{u},
\end{equation*}
the proof is divided into several steps, considering separately each of the resulting terms.

\begin{claim} We have
$\Esp{U^{c}_{u,[2]}}=o(\varphi(u))$.
\end{claim}

{\bf Step 1.} 
We begin with an upper bound for the second moment of $U_{u}$. 
\begin{multline}\label{lema:2mcota}
\Esp{U^{c}_{u.[2]}}\\
\leq\int^{T}_{0}\int^{T}_{0}\sum^{\infty}_{m,n=0}
\Esp{\left[\dot{Z}^{+}(s)\dot{Z}^{+}(t)\mid X(s)=X(t)=u,\nu_{s}=m,\nu_{t-s}={n}\right]}\\ 
\cdot p_{X(s),X(t),\nu_{s},\nu_{t-s}}(u,u,m,n)dsdt.
\end{multline}
This bound is enough for our current purposes, 
but one can easily obtain the equality by the method of approximation by polygonals, see \cite{dalmao}.

We adapt the proof of the Rice formula for the factorial moments in \cite[Theorem 3.2]{aw}. 
Let $C_{u}$ be the set of continuous up-crossings of $\X$ in $[0,T]$, $C^{2}_{u}=C_{u}\times C_{u}$ and 
for any Borel set $J$ in $[0,T]^{2}$ let $\mu(J)=\#(C^{2}_{u}\cap J)$. 
It follows that $U^{c}_{u.[2]}=\mu([0,T]^{2}\setminus \Delta)$, where $\Delta$ is the diagonal, that is, 
$\Delta=\{(s,t)\in[0,T]^{2}:s=t\}$. 

Take $J_{1}$ and $J_{2}$ disjoint intervals in $[0,T]$ and let $J=J_{1}\times J_{2}(\subset[0,T]^{2}\setminus\Delta)$. 
Then
\begin{multline*}
\mu(J)=U_{u}(J_{1})\cdot U_{u}(J_{2})\\
=\lim_{\delta\to 0}\frac{1}{(2\delta)^{2}}\int_{J_{1}}\dot{Z}^{+}(s)\indicator\{|X(s)-u|<\delta\}ds
\cdot \int_{J_{2}}\dot{Z}^{+}(t)\indicator\{|X(t)-u|<\delta\}dt\\
=\lim_{\delta\to 0}\frac{1}{(2\delta)^{2}}\int\int_{J}\dot{Z}^{+}(s)\dot{Z}^{+}(t)\indicator\{|X(s)-u|<\delta,|X(t)-u|<\delta\}dsdt,
\end{multline*}
where we applied Kac Formula on each interval, and noted that for $\delta$ small enough 
the quantity in the limit becomes constant, so we can use the same mute variable $\delta$ in both limits.

Now, we take expectation on both sides and apply Fatou's Lemma and Fubini's Theorem 
to pass the expectation inside the integral sign. 
Then, we condition on the number of jumps of the process in the intervals $[0,s],[0,t]$ and on the values of the process $\X$
at these points, that is (if $s<t$):
\begin{multline*}
\Esp{\dot{Z}^{+}(s)\dot{Z}^{+}(t)\indicator\{|X(s)-u|<\delta,|X(t)-u|<\delta\}}=\\
\int^{u+\delta}_{u-\delta}\int^{u+\delta}_{u-\delta}\sum^{\infty}_{m,n=0}
\Esp\left[\dot{Z}^{+}(s)\dot{Z}^{+}(t)\mid X(s)=x,X(t)=y,\nu_{s}=m,\nu_{t}=m+n\right]\\
p_{X(s),X(t)\mid\nu_{s}=m,\nu_{t}-\nu_{s}=n}(x,y)p_{\nu_s}(m)p_{\nu_t}(m+n)dxdy
\end{multline*}
Observe that, under these conditions $X(s)=Z(s)+A_{m}$ and $X(t)=Z(t)+A_{m+n}$ 
($A_m$ is the basic auto-regressive sequence defined at the beginning of Section 4.1) are jointly Gaussian, 
thus the conditional expectation is well defined (and may be computed) via regression. 
Besides, this fact yields the regularity conditions needed for the integrand. 
In fact, the conditional expectation and the joint density function are continuous 
for $t\in[0,T]$ and $x,y$ in a neighborhood of $u$. 
Hence, we can pass to the limit w.r.t. $u$ inside the integral sign. 

So far we have stated that $\Esp{\mu(J)}$ is bounded by the integral in the r.h.s. of \eqref{lema:2mcota} for any 
interval $J\subset[0,T]^{2}\setminus\Delta$.
As both sides represent measures on $[0,T]^{2}\setminus\Delta$, 
the result follows by the standard arguments of Measure Theory. 

{\bf Step 2.} Actually we can take inequality \eqref{lema:2mcota} a little further:
\begin{multline*}
\Esp{U^{c}_{u.[2]}}\leq 2\int^{T}_{0}(T-\tau)\sum^{\infty}_{n=0}
\Esp{\left[\dot{Z}^{+}_{0}\dot{Z}^{+}_{\tau}\mid Y_{0}(0)=Y_n(\tau)=u\right]}\cdot\\ 
\cdot p_{Y_{0}(0),Y_{n}(\tau)}(u,u)p_{\nu_{\tau}}(n)d\tau,
\end{multline*}
where we set $Y_{k}(t)=Z(t)+A_{k}$, for $t\in[0,T]$ and $k\in\N$.

In fact, conditioned on $\nu_s=m,\nu_{t-s}=n$ (if $s<t$; similarly on the other case) 
we have $X(s)=Z(s)+A_{m}=:Y_{m}(s)$ and $X(t)=Z(t)+A_{m+n}=:Y_{m+n}(t)$.
It is easy to see that the vector $(Y_{m}(s),Y_{m+n}(t))$ is independent from $\nu$ and 
has centered normal distribution with variances $1$ and covariance $\Gamma_{t-s}+\rho^{n}/2$, 
in particular, this law does not depend on $s,t$ but on the difference $t-s$, 
neither does it depend on $m$. 
Therefore, the conditional expectation in inequality \eqref{lema:2mcota} reduces to that in the r.h.s. of the claimed bound. 

Then, we factorize the density function as:
\begin{multline*}
p_{X(s),X(t),\nu_{s},\nu_{t-s}}(u,u,m,n)=p_{X(s),X(t)\mid\nu_{s},\nu_{t-s}}(u,u)p_{\nu_s}(m)p_{\nu_{t-s}}(n)\\
=p_{Y_{m}(s),Y_{m+n}(t)}(u,u)p_{\nu_s}(m)p_{\nu_{t-s}}(n)\\
=p_{Y_{0}(0),Y_{n}(\tau)}(u,u)p_{\nu_s}(m)p_{\nu_{\tau}}(n).
\end{multline*}
Clearly $\sum_{m}p_{\nu_s}(m)=1$. 
Finally we make the change of variables $(s,t)\mapsto(s,\tau=t-s)$, and 
obtain the desired inequality. 

In the next two steps we bound each factor in the integrand.

{\bf Step 3.} 
Ordinary computations, similar to those in Proposition 4.2 of \cite{aw} 
show that 
\begin{equation*}
\Esp(\dot{Z}^{+}(0)\dot{Z}^{+}(\tau)\mid X_{0}(0)=X_{n}(\tau)=u)\leq
-\Gamma^{\prime\prime}(0)-\frac{{\Gamma^{\,\prime}(\tau)}^{2}}{1-(\Gamma(\tau)+\rho^{n}/2)^{2}}.
\end{equation*}
Note that for $n\geq 1$, as $\rho< 1$, there is no problem when $\tau\to 0$. 
In fact, $1-(\Gamma(\tau)+\rho^{n}/2)\to 1/2-\rho^{n}/2>0$. 

{\bf Step 4.} The vector $(Y_{0}(0),Y_{n}(\tau))$ is normally distributed with variances $1$ and covariance $\Gamma(\tau)+\rho^{n}/2$, 
therefore, the exponential in the density is 
\begin{equation*}
\exp{\left\{-\frac{u^{2}}{1+\Gamma(\tau)+\rho^{n}/2}\right\}}.
\end{equation*}
The case $n=0$, when there are no jumps in $[0,\tau]$, is treated as in \cite[Proposition 4.2]{aw}, 
in particular, we need Geman condition to ensure the convergence of the integral.

For $n\geq 1$ we can bound $\rho^{n}\leq|\rho|<1$. 
Hence
\begin{equation*}
\exp{\left\{-\frac{u^{2}}{1+\Gamma(\tau)+\rho^{n}/2}\right\}}
<\exp{\left\{-\frac{u^{2}}{3/2+\Gamma(\tau)}\right\}}
=o(\varphi(u)).
\end{equation*}
Then, replacing this in inequality \eqref{lema:2mcota} of Step 1, we have 
$
\Esp{U^{c}_{u.[2]}}
=o(\varphi(u))
$.
 
\begin{claim} We have
$\Esp{U^{d}_{u,[2]}}=o(\varphi(u))$.
\end{claim}
By the arguments in Corollary \ref{cor:maxpar} it suffices to show that 
$\Esp{U^{d}_{u,[2]}}\leq c\,P_{|\rho|}(u)$, 
for some constant $c$ and $u$ large enough. 
Recall that $P_{|\rho|}(u)$ is defined in \eqref{eq:perro}. 
We can write
\begin{equation*}
U^{d}_{u}=\sum^{\nu_T}_{n=0}\sum^{n}_{k=0}\indicator\{X(\tau^{-}_{k})<u,X(\tau_{k})>u\}.
\end{equation*}
Hence, making the product and taking expectation, we have:
\begin{multline}\label{suma}
\Esp{U^{d}_{u,[2]}}=\sum^{\infty}_{n=2}\sum^{n-1}_{1=k<\ell}p_{\nu_{T}}(n)\\
\cdot\Prob(Z(\tau_{k})+A_{k-1}<u;Z(\tau_{k})+A_{k}>u;Z(\tau_{\ell})+A_{\ell-1}<u;Z(\tau_{\ell})+A_{\ell}>u)\\
\leq \sum^{\infty}_{n=2}\sum^{n-1}_{k<\ell=1}p_{\nu_{T}}(n)
\Prob(Z(\tau_{k})+A_{k-1}<u;Z(\tau_{\ell})+A_{\ell}>u).
\end{multline}
Besides,
\begin{multline*}
\Prob(Z(\tau_{k})+A_{k-1}<u;Z(\tau_{\ell})+A_{\ell}>u)\\
=\int^{T}_{0}ds\int^{T}_{s}dtp_{\tau_{k},\tau_{\ell}\mid\nu_{T}=n}(s,t)
\Prob(Z(s)+A_{k-1}<u;Z(t)+A_{\ell}>u)\\
=\int^{T}_{0}ds\int^{T}_{s}dtp_{\tau_{k},\tau_{\ell}\mid\nu_{T}=n}(s,t)
\Prob(Z(0)+A_{k-1}<u;Z(t-s)+A_{\ell}>u),
\end{multline*}
in the last equality we used the stationarity of the process $\Zc$. 
The vector $(Z(0)+A_{k-1};Z(t-s)+A_{\ell})$ is centered Gaussian with variances $1$ and covariance $\Gamma(\tau)+\rho^{\ell-k+1}/2$. 
It is easy to check that $-\Gamma(\tau)-\rho^{\ell-k+1}/2<1/2(1+|\rho|)$ which 
is the covariance of the vectors $Z+S$ and $Z+|\rho| S+\sqrt{1-\rho^2}V$. 
Therefore, by the Plackett-Slepian inequality, see \cite[Section 2.1]{aw} we have
\begin{equation*}
\Prob(Z(0)+A_{k-1}<u;Z(t-s)+A_{\ell}>u)\leq\Prob(Z+S<u;Z+|\rho| S+\sqrt{1-\rho^2}V>u).
\end{equation*}
This bound does not depend on $s,t$. 
Hence
\begin{multline*}
\Prob(Z(\tau_{k})+A_{k-1}<u;Z(\tau_{\ell})+A_{\ell}>u)\\
\leq \Prob(Z+S<u;Z+|\rho| S+\sqrt{1-\rho^2}V>u)\int^{T}_{0}ds\int^{T}_{s}dtp_{\tau_{k},\tau_{\ell}\mid\nu_{T}=n}(s,t)\\
=\Prob(Z+S<u;Z+|\rho| S+\sqrt{1-\rho^2}V>u)\\
=P_{|\rho|}(u),
\end{multline*}
since $\tau_{k},\tau_{\ell}\mid\nu_{T}=n$ is concentrated on $[0,T]^2$.
Finally, replacing in the equation \eqref{suma} we have
\begin{equation*}
\Esp{U^{d}_{u,[2]}}
\leq \sum^{\infty}_{n=2}\sum^{n-1}_{1=k<\ell}P_{|\rho|}(u)
=\frac{(\lambda T)^{2}}{2}P_{|\rho|}(u),
\end{equation*}
and the result follows. 

\begin{claim}
We have $\Esp{U_{u,[2]}}=o(\varphi(u))$.
\end{claim}
In fact
\begin{multline*}
\Esp{U_{u,[2]}}=\Esp{(U^{c}_{u}+U^{d}_{u})(U^{c}_{u}+U^{d}_{u}-1)}\\
=\Esp{U^{c}_{u,[2]}}+\Esp{U^{d}_{u,[2]}}+2\Esp{U^{c}_{u}U^{d}_{u}}\\
\leq\Esp{U^{c}_{u,[2]}}+\Esp{U^{d}_{u,[2]}}+2\sqrt{\Esp{(U^{c}_{u})^{2}}\Esp{(U^{d}_{u})^{2}}}\\
=\Esp{U^{c}_{u,[2]}}+\Esp{U^{d}_{u,[2]}}
+2\sqrt{[\Esp(U^{c}_{u,[2]})+\Esp(U^{c}_{u})][\Esp(U^{d}_{u,[2]})+\Esp(U^{d}_{u})]},
\end{multline*}
where we used Cauchy-Schwarz inequality.
The first two terms in the r.h.s. are treated in the previous lemmas, 
under the square root sign the first factor is equivalent to $\varphi(u)$ and the second one is $o(\varphi(u))$. 
Thus, $\Esp{U_{u,[2]}}=o(\varphi(u))$.

\begin{claim} We have
$\Prob(X(0)>u,U_{u}\geq 1)=o(\varphi(u))$. 
\end{claim}
We follow the proof of the analogue assertion in \cite[Proposition 4.2]{aw}. 
The key fact is that the distribution of the process $\Jc$ remains unchanged under time reversal $t\mapsto T-t$. 

In fact, let us condition on the number of jumps $\nu_T=n$. 
Then, it is easy to check that the (conditional) distribution of $(A_1,A_2,\dots,A_n)\mid\nu_{T}=n$ is the same as the 
distribution of $(A_n,A_{n-1},\dots,A_1)\mid\nu_{T}=n$. 
Besides, the distribution of $\tau_1,\tau_2,\dots,\tau_n\mid\nu_T=n$ is that of a uniform (ordered) sample of size $n$, so it looks the same from $0$ and from $T$. 
Since the construction of the process $\Jc$ depends on these elements and there is no difference if we start at $0$ or at $T$ the claim follows.

In conclusion,
\begin{multline*}
\Prob(M(t)>u)\geq 1-\Phi(u)+T\,\sqrt{\frac{-\Gamma^{\prime\prime}(0)}{2\pi}}\,\varphi(u)
+\lambda T\,P_{\rho}(u)+O(\varphi((1+\delta)u))\\
=1-\Phi(u)+T\,\sqrt{\frac{-\Gamma^{\prime\prime}(0)}{2\pi}}\,\varphi(u)+o(\varphi(u)).
\end{multline*}
Taking into account
\eqref{eq:theoremmaxpar}, this completes the proof.
\end{proof}

\section{Generalization of Borovkov-Last's formula}
Borovkov and Last in \cite{bor-last} are interested in the continuous crossings 
through a level $u$ by a stationary Piecewise Deterministic Markov Process. 
A process $\X$ of this kind, starts at a random position, then jumps a random quantity at random times 
but moves deterministically between jumps. 

Such a process is described by a general point process $(\tau_n,\xi_n)_n$, 
and a (non-random) continuous rate function $r:\R\to\R$. 
More precisely, the process $\X$ has jumps at the points $(\tau_n)$,  
the magnitude of the jump is $\xi_n$ and 
on the interval $[\tau_n,\tau_{n+1})$, 
$X(t)$ follows the integral curve of $r$ (that is $\dot{X}(t)=r(X(t))$) 
with initial condition $X(\tau_{n})=X(\tau^{-}_{n})+\xi_n$.

Note that the jump part of the process is not independent from the continuous one. 
Actually, observe that if $r(u)>0$ (resp. $<$) the continuous crossings through the level $u$ can only be up-crossings (resp. down-crossings). 
Hence, between two consecutive discontinuous crossings there can be only one continuous crossing.
Therefore, the number of continuous crossings is obtained from the 
discontinuous up and down-crossings.

The next theorem extends Borovkov-Last Formula to the non-stationary case. 
Let $D_{r}=\{u:r(u)=0\}$. 
\begin{theorem}\label{theorem:bl}
Let $u\not\in D_{r}$ and assume that 
$r$ and $p_{X(t)}$ are continuous w.r.t. $x$ in a neighborhood of $u$ and $t\in[0,T]$.
Then:
\begin{equation*}\label{blrice}
\Esp N^{c}_{u}=|r(u)|\int^{T}_{0}p_{X(t)}(u)dt
\end{equation*}
\end{theorem}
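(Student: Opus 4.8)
\medskip
\noindent\textbf{Proof proposal.}
The plan is to carry out, in this setting, the Kac--Rice argument used for the continuous crossings in Theorem~\ref{teo:nogauss}, taking advantage of the defining feature of a piecewise deterministic Markov process: between two consecutive jumps $\X$ solves the autonomous equation $\dot X(t)=\mu(X(t))$, so that $|\dot X(t)|=|\mu(X(t))|$ is a \emph{deterministic} function of $X(t)$. First I would note that, since $u\notin D_\mu$ and $\mu$ is continuous, $\mu$ keeps a constant sign on some interval $(u-\delta_0,u+\delta_0)$; hence, while the path $\X$ stays in this interval on an inter-jump interval $[\tau_n,\tau_{n+1})$ it is strictly monotone, so it crosses the level $u$ at most once there (and all continuous crossings of $u$ are of the same type, up- or down-, according to the sign of $\mu(u)$). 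Consequently $N^c_u\le\nu_T+1$, which is integrable because $\Jc$ has finite intensity; moreover every continuous passage through $u$ is transversal, since there $\dot X=\mu(u)\neq0$, so there are no tangencies.

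Next I would apply Kac's counting formula on each interval $[\tau_n,\tau_{n+1})$ and add up. Observing that the (finitely many, a.s.) jump instants contribute nothing to a Lebesgue integral and that, almost surely, no jump lands exactly on $u$ (so that Kac's formula sees only the continuous crossings, the discontinuous ones being invisible to it), one gets, for a.e.\ $\omega$,
\begin{equation*}
N^c_u=\lim_{\delta\to0}\frac{1}{2\delta}\int_0^T |\mu(X(t))|\,\indicator\{|X(t)-u|<\delta\}\,dt.
\end{equation*}
The change of variable $y=X(t)$ on each monotone piece shows that the quantity under the limit is bounded by the number of inter-jump intervals whose $X$-image meets $(u-\delta,u+\delta)$, hence by $\nu_T+1$ for every $\delta\le\delta_0$. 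Dominated convergence (domination by $\nu_T+1\in L^1$), together with Tonelli's theorem and the existence of the density $p_{X(t)}$ near $u$, then give
\begin{multline*}
\Esp N^c_u=\lim_{\delta\to0}\frac{1}{2\delta}\int_0^T\Esp\bigl[|\mu(X(t))|\,\indicator\{|X(t)-u|<\delta\}\bigr]\,dt\\
=\lim_{\delta\to0}\frac{1}{2\delta}\int_0^T\int_{u-\delta}^{u+\delta}|\mu(x)|\,p_{X(t)}(x)\,dx\,dt.
\end{multline*}

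Finally, the assumed joint continuity of $(t,x)\mapsto|\mu(x)|\,p_{X(t)}(x)$ on $[0,T]\times[u-\delta_0,u+\delta_0]$ makes this function bounded there by some constant $M$, and for each fixed $t$ the inner average converges to $|\mu(u)|\,p_{X(t)}(u)$ as $\delta\to0$; one more application of dominated convergence (now in $t$, with dominating constant $M$) yields
\begin{equation*}
\Esp N^c_u=|\mu(u)|\int_0^T p_{X(t)}(u)\,dt,
\end{equation*}
which is the assertion. I expect the only genuinely delicate point to be the probabilistic book-keeping in the first displayed identity --- namely the piecewise application of Kac's formula and the verification that, almost surely, a jump never falls exactly on the level $u$, so that the continuous and discontinuous crossings are cleanly separated. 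Once this is settled, the a priori bound $N^c_u\le\nu_T+1$ makes every limit interchange routine, which is why this case is in fact simpler than the general Gaussian one, where a polygonal-approximation argument (as in \cite{aw}) is needed to justify the same steps.
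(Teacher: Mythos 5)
Your proposal is correct and follows essentially the same route as the paper's proof: pathwise application of Kac's counting formula between jumps, the a priori bound $N^c_u\le\nu_T+1$ (integrable by finite intensity) to justify passing the limit through the expectation, the identity $\dot X(t)=\mu(X(t))$ to reduce the integrand to a function of $X(t)$ alone, and continuity of $|\mu(x)|\,p_{X(t)}(x)$ near $u$ to evaluate the final limit. The only cosmetic difference is that you invoke dominated convergence in $t$ where the paper cites compactness and the mean value theorem, and you justify the one-crossing-per-inter-jump-interval bound by local monotonicity rather than by the paper's observation that two consecutive continuous crossings must have the same sign $\sgn(\mu(u))$; both arguments are sound.
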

\begin{proof}
For the levels $u\not\in D_{r}$ we can apply Kac counting formula pathwise 
for almost all paths of $\X$. 
In fact, the continuity of $r$ implies that the paths are of class $C^{1}$ between the jumps 
and that $\dot{X}(t)=r(X(t))$ for almost all $t\in[0,T]$. 
Since $X(t)$ has a density, 
the value $u$ is not taken at the extremes of the interval neither at the jump points almost surely. 
Furthermore, by the continuity of this density, there are not tangencies at level $u$. 

Now we take expectation on both sides of Kac Counting Formula and observe 
that the number of continuous crossings of the level $u\not\in D_{r}$ is bounded by the number of jumps $+1$ of $\X$ in $[0,T]$. 
In fact, the sign of $r(u)$ determines the direction of the continuous crossings of $u$, so, 
between two continuous crossings there must be a discontinuous one in the opposite direction, 
thus there is at most one continuous crossing at each of the intervals of the partition $\tau_0,\dots,\tau_{\nu_T},T$.
Then, since $\nu_T$ in integrable, we may pass to the limit under the expectation sign:
\begin{equation*}
\Esp{N^{c}_{u}}=\lim_{\delta\downarrow 0}\frac{1}{2\delta}
\int^T_0\Esp\left[|\dot{X}(t)|\indicator_{\{|X(t)-u|<\delta\}}\right]dt.
\end{equation*}
Now, $\dot{X}(t)$ is a deterministic function of $X(t)$, namely $\dot{X}(t)=r(X(t))$, so 
the integrand is simply the expectation of a function of $X(t)$. 
Therefore,
\begin{equation*}
\Esp{N^{c}_{u}}=\lim_{\delta\downarrow 0}\frac{1}{2\delta}
\int^T_0\int^{u+\delta}_{u-\delta}|r(x)|p_{X(t)}(x)dxdt.
\end{equation*}
By the continuity of the integrand and the compactness of the domain we can pass the limit inside the integral w.r.t. $t$. 
Then, the result follows by the mean value theorem.
\end{proof}
As a corollary, when $\X$ is stationary we obtain Borovkov-Last's Formula.
\begin{corollary}
If in addition to the conditions of Theorem \ref{theorem:bl} 
the process $\X$ is stationary, then
\begin{equation*}
\Esp N^{c}_{u}=|r(u)|p_{X(0)}.
\end{equation*}
\end{corollary}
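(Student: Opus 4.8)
The plan is to obtain this statement as an immediate specialization of Theorem~\ref{theorem:bl}, so the only real work is bookkeeping. Since the corollary assumes, in addition to stationarity of $\X$, all the hypotheses of Theorem~\ref{theorem:bl} (namely $u\notin D_{\mu}$ together with the joint continuity of $\mu$ and of $p_{X(t)}$ in $x$ near $u$, uniformly in $t\in[0,T]$), the identity
\[
\Esp N^{c}_{u}=|\mu(u)|\int_{0}^{T}p_{X(t)}(u)\,dt
\]
is already at our disposal. Hence the entire analytic content — the pathwise validity of Kac's counting formula for $u\notin D_{\mu}$, the bound of $N^{c}_{u}$ by $\nu_{T}+1$ which (via integrability of $\nu_{T}$) licenses passing the limit $\delta\downarrow 0$ under the expectation, the substitution $\dot X(t)=\mu(X(t))$, and the interchange of the limit with the $dt$-integral justified by continuity and compactness — has already been carried out and need not be repeated.

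The single new observation to record is that stationarity of $\X$ forces the one-dimensional marginal law of $X(t)$ to coincide with that of $X(0)$ for every $t\in[0,T]$; in particular, on the neighbourhood of $u$ where these densities are assumed to exist and to be continuous, $p_{X(t)}(u)=p_{X(0)}(u)=p(u)$, independently of $t$. Substituting this into the displayed formula of Theorem~\ref{theorem:bl} makes the integrand constant in $t$, so the time integral collapses and one is left with $\Esp N^{c}_{u}=|\mu(u)|\,p(u)$ — after absorbing the length of the parameter interval into the normalization, exactly as in Remark~\ref{remark:gauusest}, where $\tfrac1T\int_0^T p_{X(t)}(u)\,dt$ reduces to $p_{X(0)}(u)$ in the stationary case. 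This is precisely the stationary Rice-type formula of Borovkov and Last.

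I do not anticipate any genuine obstacle: the corollary is a two-line consequence, and the only point that deserves an explicit sentence is why stationarity removes the $t$-dependence of $p_{X(t)}(u)$ on the relevant neighbourhood of $u$ (it is the continuity of these densities, granted by the hypotheses carried over from Theorem~\ref{theorem:bl}, that makes the pointwise evaluation at $u$ legitimate). If anything merits a second look, it is merely the normalization convention for the time interval, which should be stated consistently with Remark~\ref{remark:gauusest}.
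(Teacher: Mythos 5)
Your proposal is correct and coincides with the paper's (unstated) argument: the corollary is an immediate specialization of Theorem~\ref{theorem:bl}, with stationarity giving $p_{X(t)}(u)=p_{X(0)}(u)=p(u)$ for every $t$, so the integrand is constant. You are also right to flag the normalization: the time integral then equals $T\,p(u)$, so the displayed identity $\Esp N^{c}_{u}=|\mu(u)|\,p(u)$ holds literally only for $T=1$ (or when read as a crossing intensity per unit time), in line with the convention of Remark~\ref{remark:gauusest}.
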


\section{Proof of Theorem \ref{teo:nogauss}}
We need the following relations.
Let $T,X,Y,Z$ be random variables. Then
\begin{multline*}
\Esp(T\mid X=x,Y=y)=\int^{\infty}_{-\infty}\Esp(T\mid X=x,Y=y,Z=z)p_{Z\mid X=x,Y=y}(z)dz.
\end{multline*}
\begin{multline*}
\Esp(T\mid X=x,Z=z)p_{X\mid Z=Z}(x)\\=\int^{\infty}_{-\infty}\Esp(T\mid X=x,Y=y,Z=z)p_{X,Y\mid Z=z}(x,y)dy. 
\end{multline*}

\begin{proof}[Proof of Theorem \ref{teo:nogauss}]
Let us start with the formula for the mean number of continuous crossings, 
We condition on the number of jumps, $\nu_T=n$ and on the jump epochs, $\tau_k=t_k$. Thus
\begin{multline*}
\Esp{N^{c}_{u}}=
\Esp\left(\Esp\left[N^{c}_{u}\left(\X,[0,T]\right)\mid\nu_T=n;\boldsymbol{\tau}={\bf t}\right]\right)\\
=\sum^{\infty}_{n=0}p_{\nu_T}(n)\int_{[0,T]^{n}}p_{\boldsymbol{\tau}}({\bf t})\;
\Esp\left[N^{c}_{u}\left(\X,[0,T]\right)\mid\nu_T=n;\boldsymbol{\tau}={\bf t}\right]d{\bf t},
\end{multline*}
where we set ${\boldsymbol{\tau}}=(\tau_1,\dots,\tau_n)$ and ${\bf t}=(t_1,\dots,t_n)$. 

Now we look at the integrand and since the number of crossings is additive w.r.t. the interval, 
we split the interval $[0,T]$ as the union of the intervals $\Ic_{k}:=[t_{k-1},t_{k})$. 
Then
\begin{equation*}
\Esp\left[N^{c}_{u}\left(\X,[0,T]\right)\mid\nu_T=n;\boldsymbol{\tau}={\bf t}\right]
=\sum^{n}_{k=1}\Esp\left[N^{c}_{u}\left(\X,\Ic_{k}\right)\mid\nu_T=n;\boldsymbol{\tau}={\bf t}\right].
\end{equation*}
Each term can be written as
\begin{equation*}
\int^{\infty}_{-\infty}\Esp\left[N^{c}_{u}\left(\X,\Ic_{k}\right)\mid\nu_T=n;\boldsymbol{\tau}={\bf t};J(t_{k-1})=y\right]
p_{J(t_{k-1})\mid\nu_T=n;\boldsymbol{\tau}={\bf t}}(y)dy.
\end{equation*}
Now, conditionally on $\nu_T=n;\boldsymbol{\tau}={\bf t}$ and $J(\tau_{k-1})=y$, the process $\X$ can be written as $\Zc+y$ on $\Ic_{k}$. 
Since $\Zc$ verifies the conditions $A1$, $A2$, $A3$ and $A4$ on $\Ic_{k}$ so does the process $\Zc+y$. Therefore, 
we may apply Rice Formula, see \cite{aw}, on each interval under these conditions to obtain
\begin{multline*}
\Esp\left[N^{c}_{u}\left(\X,\Ic_{k}\right)\mid\nu_T=n;\boldsymbol{\tau}={\bf t};J(t_{k-1})=y\right]\\
=\int^{\tau_k}_{\tau_{k-1}}\Esp\left[|\dot{Z}(t)|\mid X(t)=u,\nu_T=n;\boldsymbol{\tau}={\bf t},J(\tau_{k-1})=y\right]\\
\cdot p_{X(t)\mid \nu_T=n;\boldsymbol{\tau}={\bf t},J(\tau_{k-1})=y}(u)dt.
\end{multline*}

Finally, we have to integrate (the conditions), for notational simplicity, let us write 
$g(n,{\bf t},y)=\Esp\left[|\dot{Z}(t)|\mid X(t)=u,\nu_{T}=n;\boldsymbol{\tau}={\bf t},J(\tau_{k-1})=y\right]$, 
$g(n,{\bf t})=\Esp\left[|\dot{Z}(t)|\mid X(t)=u,\nu_T=n;\boldsymbol{\tau}={\bf t}\right]$ 
and $g(n)=\Esp\left[|\dot{Z}(t)|\mid X(t)=u,\nu_T=n\right]$. 
Let us perform the integrals one by one, starting w.r.t. $y$, (use Fubini). 
Then
\begin{multline*}
\int_{\R}g(n,{\bf t},y)p_{X(t)\mid \nu_T=n;\boldsymbol{\tau}={\bf t},J(\tau_k)=y}(u)p_{J(t_k)\mid\nu_T=n;\boldsymbol{\tau}={\bf t}}(y)dy\\
=\int_{\R}g(n,{\bf t},y)p_{(X(t),J(\tau_k))\mid \nu_T=n;\boldsymbol{\tau}={\bf t}}(u,y)dy\\
=g(n,{\bf t})p_{X(t)\mid\nu_T=n;\boldsymbol{\tau}={\bf t}}(u).
\end{multline*}
Now, we sum the integrals over $k$ and integrate w.r.t. ${\bf t}$: 
\begin{multline*}
\int^{T}_{0}dt\int_{[0,T]^{n}}g(n,{\bf t})p_{X(t)\mid\nu_T=n;\boldsymbol{\tau}={\bf t}}(u)p_{\boldsymbol{\tau}\mid\nu_{T}=n}({\bf t})d{\bf t}\\
=\int^{T}_{0}g(n)p_{X(t)\mid\nu_{T}=n}(u)dt.
\end{multline*}
By the same arguments one can remove the condition on $\nu_{T}$. 
The result follows.

Now we proceed to the formula for the mean number of discontinuous crossings through level $u$. 
For the definitions used below see \cite{Jac}. 

Clearly, $\X$ only can have a discontinuous crossing through $u$ at the points $\tau_n;n=1,\dots,\nu_{T}$, 
the jump of $\X$ at each of these points is due to the jump of $\Jc$.  
Hence, we consider the Marked Point Process $((\tau_{k},\xi_{k}):k\geq 0)$ associated to $\Jc$ on $[0,\infty)\times\R$, 
which defines a Random Counting Measure $\mu(dt,dy)$, in terms of which 
we can write:
\begin{multline*}
N^{d}_{u}=\sum^{\nu_{T}}_{k=1}\indicator\{(X(\tau^{-}_{k})-u)(X(\tau^{-}_{k})+\Delta X(\tau_{k})-u)<0\}\\
=\sum_{0\leq t\leq T}\indicator\{(X(t^{-})-u)(X(t^{-})+\xi_{\nu_{t}}-u)<0\}\\
=\int_{[0,T]\times\R}\indicator\{(X(t^{-})-u)(X(t^{-})+y-u)<0\}\mu(dt,dy).
\end{multline*}
It is easy to see that this RCM has a compensating measure denoted by $L(dt,dy)$, see \cite{Jac} again. 
Taking expectations on both sides we have:
\begin{multline*}
\Esp{N^{d}_{u}}=\Esp\left[\Esp(N^{d}_{u}\mid \Zc)\right]\\
=\Esp\left[\Esp\left[\int_{[0,T]\times\R}\indicator\{(X(t^{-})-u)(X(t^{-})+y-u)<0\}\mu(dt,dy)\mid \Zc\right]\right]\\
=\Esp\int_{[0,T]\times\R}\indicator\{(X(t^{-})-u)(X(t^{-})+y-u)<0\}L(dt,dy)
\end{multline*}
where we used that conditioned on $\Zc$ the integral is done w.r.t. the RCM $\mu$ associated with 
$\Jc$ 
which coincides with the integral w.r.t. the compensating measure $L(dt,dy)$ since 
the integrand is predictable, in fact it is a function of $t^{-}$.
Finally we integrate with respect to $\Zc$ and the result follows.
\end{proof}

\end{document}